\documentclass{siamltex}

\usepackage[%
  colorlinks%
  ,bookmarks={false}%
  ,pdftitle={}%
  ,pdfsubject={}%
  ,pdfkeywords={}%
  ,pdfauthor={Tom Maerz <tom.maerz@gmx.de>}%
  ]{hyperref}

\usepackage{amsmath}
\usepackage{amssymb}
\usepackage{mathpazo}
\usepackage{graphicx}
\usepackage{enumerate}
\usepackage[latin1]{inputenc}
\usepackage[font=footnotesize]{subcaption}
\usepackage[labelfont=bf,font=small,labelsep=space]{caption}
\usepackage[left=1.5in,right=1.5in, top=1.5in]{geometry}
\usepackage[ruled,commentsnumbered]{algorithm2e}
\usepackage{txfonts}

\newcommand{\R}{{\mathbb  R}}

\newcommand{\laplace}{\Delta}

\newcommand{\BV}{{BV}}
\newcommand{\Lan}{{\mathcal L}}
\newcommand{\sat}{{\text{sat}}}
\newcommand{\vf}[1]{{\mathbf{#1}}} 
\newcommand{\bnd}[1]{{\vf{\hat{#1}}}} 
\newcommand{\nd}[1]{{\hat{#1}}} 
\newcommand{\rd}[1]{{\bar{#1}}} 
\newcommand{\brd}[1]{{\vf{\bar{#1}}}}
\newcommand{\MPI}{{\vf{MPI}}}

\DeclareMathOperator{\diver}{div}
\DeclareMathOperator{\trace}{trace}

\DeclareMathOperator{\sign}{sign}

\setcounter{totalnumber}{1}

\title{Model-Based Reconstruction for Magnetic Particle Imaging in 2D and 3D}
\author{
Thomas M\"{a}rz\thanks{
Mathematical Institute,
University of Oxford, UK.
Email: {\tt maerz@maths.ox.ac.uk}.} 
\and Andreas Weinmann\thanks{
Department of Mathematics, 
Technische Universit\"at M\"unchen,
and Helmholtz Center Munich, Germany.
Email: {\tt andreas.weinmann@tum.de}.} 
}

\begin{document}
\maketitle

\begin{abstract}
We contribute to the mathematical modeling and analysis  
of magnetic particle imaging which is a promising new in-vivo imaging modality.
Concerning modeling, we develop a structured decomposition of the imaging process
and extract its core part which we reveal to be common to all previous contributions
in this context. The central contribution of this paper is the development of reconstruction formulae for MPI in 2D and 3D.  
Until now, in the multivariate setup, only time consuming measurement approaches are available,
whereas reconstruction formulae are only available in 1D.
The 2D and the 3D (describing the real world) reconstruction formulae which we derive here are significantly different from the 1D situation --   
in particular there is no Dirac property in dimensions greater than one when the particle sizes approach zero.
As a further result of our analysis, we conclude that the reconstruction problem in MPI is severely ill-posed. 
Finally, we obtain a model-based reconstruction algorithm.
\end{abstract}

\begin{keywords} 
Magnetic particle imaging, model-based reconstruction, reconstruction formulae, inverse problems. 
\end{keywords}

\begin{AMS}
92C55, 94A12, 94A08, 44A35, 65R32.
\end{AMS}

\section{Introduction}

Magnetic particle imaging (MPI) is an emerging imaging mo\-dality which was developed by \mbox{Gleich} and Weizenecker in 2005 \cite{GleichWeizenecker2005}. 
Its goal is to determine the spatial distribution of magnetic nanoparticles by measuring the non-linear magnetization response of the particles to an applied magnetic field. 
For example, in angiographic applications, the particles are injected in humans' systems of blood vessels in order to image this system 
by determining the concentration of the particles.

The particles are not observed directly, but a certain current induced by them is measured. 
More precisely, the non-linear response of superparamagnetic nanoparticles to a temporally changing applied magnetic field is exploited: 
Changing the magnetic field results in a change of magnetization locally near a point, say $x$; 
the local change of magnetization induces a current in a set of receive coils where the current reflects the concentration of the particles near $x.$  

MPI is a very promising imaging modality for biomedical diagnostics
because of two features:
firstly, MPI offers a high dynamic spatial and temporal resolution
\cite{KnoppBiederer_etal2010}.
Secondly, in contrast to many other tomographic methods such as PET or SPECT
(cf. \cite{SPECT,PET}), MPI does not employ any ionizing radiation.   
In particular, patients are not exposed to radioactive substances in the imaging process. 
Furthermore, only the magnetic particles are imaged which avoids artefacts stemming from nearby tissue as in angiographic CT. 
Potential applications of MPI are any tracer based diagnostics, such as blood flow imaging and cancer detection \cite[Ch.\! 7]{BuzugKnopp2012} as well as quantitative stem cell imaging \cite{zheng2013quantitative}. 
A basic reference on MPI is the just mentioned book \cite{BuzugKnopp2012}, as well as the thesis 
\cite{KnoppDiss2010}. Concerning early work, we exemplarily refer to 
\cite{GleichWeizeneckerBorgert2008, Weizenecker_etal2009, Rahemeretal2009}.
As further references we would also like to mention \cite{knopp2011prediction, Knopp_etal2011hm,Knopp_etal2010ec, sattel2009single, goodwill2012x,saritas2013magnetic, konkle2011development,ferguson2009optimization}. These references also include references to related imaging setups and to related questions concerning particle design as well.

In a proper imaging setup, the MPI forward operator is well modeled as a linear operator which has to be inverted (in the sense of inverse problems) to reconstruct the signal from the measured voltage.
The present approaches to represent this forward operator can be categorized into measurement-based approaches,
 e.g., \cite{WeizeneckerBorgertGleich2007, Rahmer_etal2012, Lampe_etal2012}
and model-based approaches,
 e.g., \cite{KnoppSattel_etal2010, Gruettner_etal2013}. 
In the measurement based approach, a basis (or, more general, a dictionary) is considered and, for each member, the system response is measured physically.
This is rather expensive; in particular, when reconstructing with higher resolution. 
This is actually a time limiting factor in practice.
Knowing these measurements, the reconstruction consists of the (regularized) solution of the corresponding system of linear equations with the rows of the corresponding matrix being the measured system responses. In order to reduce the time consumption, a model-based approach is desirable. 

Presently, there are basically three approaches to derive a model for the MPI forward operator which are intimately related.
They have been developed over the past five years; for a detailed overview, we refer to \cite{Gruettner_etal2013}.
All these models employ Faraday's law of induction with respect to a volumetric coil; 
see \cite{BuzugKnopp2012,KnoppDiss2010} for a derivation. 
Combined with the Langevin model of the magnetization response a description of the received signal is obtained. 
The applied magnetic field is in general a combination of a static field and a time-dependent drive field. 
The latter determines the movement of the so-called field free point that scans the test object.

Let us be more precise.
In \cite{Rahemeretal2009}, Rahmer et al.\ consider a 1D scenario and study the Fourier series of the signal derived from a delta distribution. 
They conclude that, in the case of ideal particles, the system function can be written in terms of Chebyshev polynomials.
In the 2D/3D scenario such a representation is 
not available \cite{Rahmer_etal2012}.
In contrast to \cite{Rahemeretal2009}, Goodwill and Conolly  \cite{GoodwillConolly2010, GoodwillConolly2011} 
and Schomberg \cite{Schomberg2010} follow a geometric approach. 
In \cite{GoodwillConolly2010, GoodwillConolly2011}, the received signal is studied in terms of the trajectory of the field free point
whereas the approach of \cite{Schomberg2010} studies the integral of the received signal instead of the signal itself.
The first authors observe that the field free point 
in the spatial domain (or x-space in their terminology) 
is uniquely determined 
by the magnetic drive field and vice versa. 

Their main result is that the signal generation process can be described by a certain kind of convolution operator
based on a matrix-valued point-spread function or kernel (cf. the ``Generalized MPI signal equation'' in \cite{GoodwillConolly2011}) involving the field-free point and the tangent to its trajectory. 

Schomberg \cite{Schomberg2010} obtains a formulation of convolution type in which the kernel describes the magnetization response of the system. In contrast to 
\cite{GoodwillConolly2010, GoodwillConolly2011}, the kernel is vector-valued and
a different quantity is described.  
For the 1D case, it is shown by Gr\"uttner et al. \cite{Gruettner_etal2013} that the x-space formulation is mathematically equivalent to the frequency space formulation of the model in \cite{Rahemeretal2009}. 

Albeit some previous approaches allow for arbitrary trajectories, to our knowledge, no previous approach is detached from them. Furthermore, in contrast to 1D, there are no reconstruction formulae and, thus, no algorithmic approaches based on such formulae are available. The previous model-based reconstruction algorithms in 2D and 3D mainly consist of calculating the entries of the corresponding matrices instead of measuring them. 

Certainly, reconstruction formulae in 2D and 3D are very desireable. We believe that such formulae have not been derived yet for the following reason: in contrast to 1D, the problem cannot be modeled  as a standard time-invariant system which would result in a standard deconvolution problem.
Instead, a matrix-valued kernel enters the scene which modifies the problem in such a way, that, in the recent years,  no solution was figured out -- although desired. As it turns out in this paper, 
a simple but striking observation allows us to develop an approach which solves the problem.
Indeed, applying this striking observation casts the problem into a simpler problem one can deal with easier.

\paragraph*{Contributions} 
In this paper we make three contributions:
(i) a unified spatial model in terms of an MPI core operator;
(ii) reconstruction formulae for multivariate MPI in 2D and 3D;
(iii) reconstruction algorithms for multivariate MPI in 2D and 3D.

Concerning (i), we revisit the modeling of the MPI time signal generation process and decompose it.
We identify an MPI core operator \eqref{eq:MPIcoreOperator} acting on phase space.
The phase space point of view has the advantage of being totally free of the concept of any trajectories. Trajectories only enter the scene as a means of getting data for 
the operator equation involving the core operator.
This point of view is essential for understanding the spatial action of an MPI scanner on a spatial
particle distribution (the image of interest) and is key for the derivation of our reconstruction formulae.
The MPI core operator is formulated in a common form for all three cases 1D, 2D and 3D.

The most important contribution of this paper is 
the derivation of reconstruction formulae 
(Theorems~\ref{thm:RecoIdeal} and \ref{thm:RecoNonIdeal})
for multivariate MPI in 2D and 3D (ii). 
This is a crucial step to bring a model-based reconstruction approach for dimensions higher than one to the MPI practinoners.  
Until now, such formulae only existed in 1D (cf. \cite{Rahemeretal2009,Rahmer_etal2012}), while for 2D and 3D only extremely time consuming measurement approaches have been employed in these practically relevant situations.
These formulas are a result of our analysis of the MPI core operator.
Our analysis shows that
the 1D situation is significantly different from the 2D and 3D situation;
we also emphasize that the 2D and the 3D situation themselves are also significantly different. 
This difference is owed to a close relationship between the MPI core operator and the fundamental solution of Laplace's equation
which we highlight in Theorem~\ref{thm:IdealizationTheorem}.
Our analysis also reveals the severe ill-posedness of the MPI reconstruction problem (Theorem \ref{thm:SevIllposed}) which explains why regularization is needed. 

Taking this into account, we build reconstruction algorithms (Algorithm~\ref{alg:recAlgo}) for multivariate MPI which are based on our reconstruction formulae (iii).
In this way, we provide a model-based reconstruction approach for the 2D and 3D case.
To our knowledge, this is the first approach based on a reconstruction formula for the 
multivariate case.

\paragraph*{Outline of the paper}
In Section \ref{sec:MathematicalModel}, 
we provide a thorough mathematical description of the MPI signal encoding. 
In particular, we provide a trajectory free description revealing the core of the MPI signal generation process.
In Section~\ref{sec:AnaysisMPIop}, we analyze this MPI core operator. Here, we 
consider first a high resolution idealization limit in Section~\ref{sec:idealCoreOp}.
Using the derived results, we consider the non-ideal case in Section~\ref{sec:nonIdealCoreOp}. 
In Section~\ref{sec:ReconstructionAlgo}, we apply the derived reconstruction formulas 
for the MPI core operator to establish a reconstruction algorithm. 
In Section~\ref{subsec:disc}, we start with the discretization;
then, we propose a reconstruction algorithm in Section~\ref{subsec:recAlgoDesc};
finally, we illustrate our algorithm with a numerical example in Section~\ref{subsec:numExperiment}.

\section{Mathematical Model of the MPI Signal Encoding}
\label{sec:MathematicalModel}

\begin{figure}[t]
\begin{center} 
\includegraphics[width=0.5\linewidth]{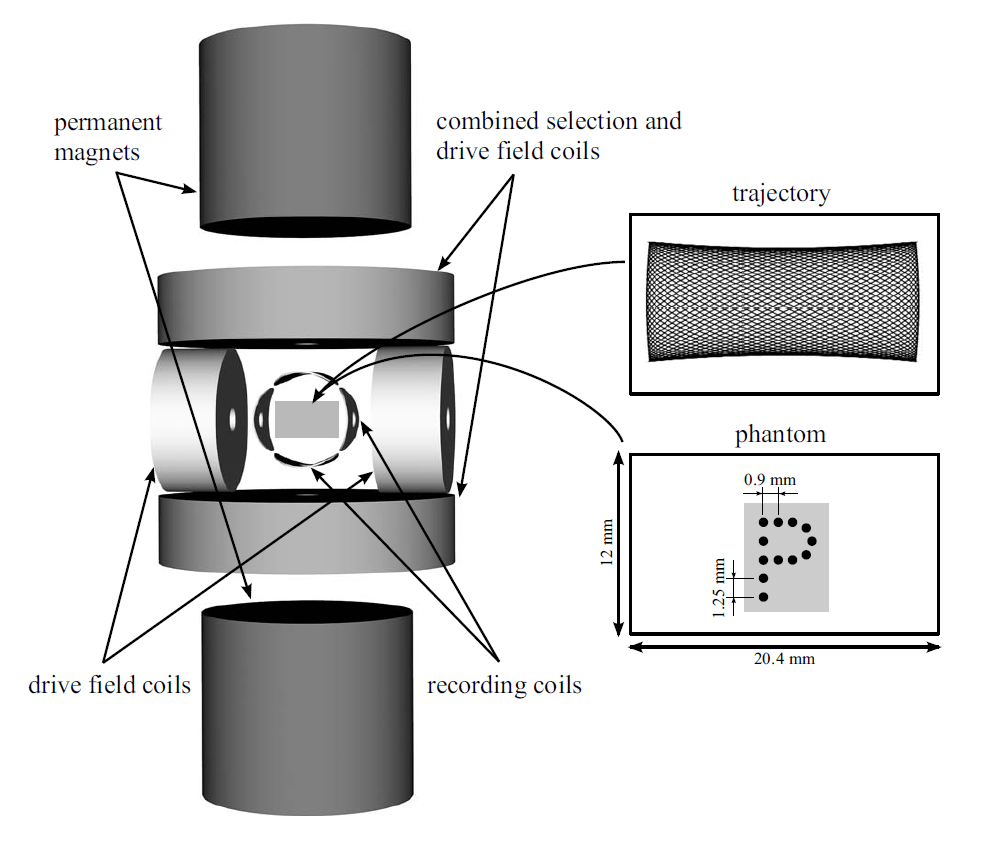}
\caption{Schematic of a 2D MPI scanner (courtesy of \cite{KnoppBiederer_etal2010}).
The rectangle in the center of the scanner is the field of view (FOV).
Two permanent magnets and the outer coil pair aligned with the $y$-direction generate the selection field $\vf{H}^S$.
Both outer coil pairs are used to generate the drive field $\vf{H}^D$ which moves the field free point (FFP) in the FOV for example on Lissajous trajectories.
The inner coil pairs record the signal.}
\label{fig:MPIScannerSchematic}
\end{center}
\end{figure}

We start by providing the basic ideas and principles of MPI in Section \ref{subsec:Basics}. 
We discuss briefly the one dimensional situation of particle detection in Section \ref{forwMPI1d} 
and then turn to the multivariate situation which is the central topic of this paper.
In Section \ref{subsec:MPIForwHighdim} we derive the model describing the MPI signal generation.
Then we reveal the core of the MPI signal generation process in Section \ref{subsec:DecomposingMPIo}.
In Section \ref{subsec:RelExistingModels}  we relate to the existing models.

\subsection{Basic Ideas and Principles}
\label{subsec:Basics}

The idea of magnetic particle imaging is to leverage the non-linear response of superparamagnetic nanoparticles to a temporally changing applied
magnetic field $\vf{H}(x,t)$. Figure~\ref{fig:MPIScannerSchematic} shows a schematic of a 2D MPI scanner. The combination of the permanent magnets and the outer two coil pairs
generates the magnetic field $\vf{H}(x,t)$ while the inner coil pairs record the signal. 
(A similar set-up for a 3D MPI scanner is possible by using three outer
and three inner coil pairs.)

The recording coils receive a voltage $\vf{u}(t)$ which is described by Farraday's law of induction as the temporal
change of the magnetic flux $\vf{\Phi}(t)$, i.e.
\begin{align}\label{eqn:Faraday}
	\vf{u}(t) &= - \frac{d}{dt} \vf{\Phi}(t), & \vf{\Phi}(t) &= \mu_0 \int\limits_{\R^3} \vf{R}(x) (\vf{H}(x,t) + \vf{M}(x,t)) \; dx.
\end{align}
The flux $\vf{\Phi}(t)$ of \eqref{eqn:Faraday} is caused by the applied field $\vf{H}(x,t)$ as well as the magnetization response $\vf{M}(x,t)$. 
The parameter $\mu_0$ denotes the magnetic permeability and $\vf{R}(x) \in \R^{3 \times 3}$ the sensitivity pattern of the three recording coil pairs.  

The second important principle is the Langevin theory of paramagnetism \cite{chikazumi1978physics,jiles1998introduction}:
it gives a description of the magnetization response $\vf{M}(x,t)$
for superparamagnetic nanoparticles in terms of the Langevin function $\Lan$:
\begin{align}\label{eqn:magnet}
	\vf{M}(x,t) &= \rho(x) \; m \; \Lan \left( \frac{|\vf{H}(x,t)|}{H_\sat} \right) \; \frac{\vf{H}(x,t)}{|\vf{H}(x,t)|}, & \Lan(x) &= \coth(x) - \frac{1}{x}.
\end{align}
Here $m$ is the magnetic moment of a single particle, $\rho$ is the concentration of the particles. 
Moreover, the magnetization field $\vf{M}$ is aligned with the applied field $\vf{H}$ and vanishes if $\vf{H}$ is zero.
The goal of magnetic particle imaging is to reconstruct the spatial particle concentration $\rho$ from the received time signal $\vf{u}(t)$.
Since only the magnetization response depends on $\rho$ the signal $\vf{u}(t)$ is reduced to the wanted signal $\vf{s}(t)$,
\begin{align}\label{eq:MPIproblemInTime}
	\vf{s}(t) &= - \mu_0 \frac{d}{dt} \int\limits_{\R^3} \vf{R}(x) \vf{M}(x,t) \; dx,
\end{align}
by subtraction of a reference voltage \cite{Schomberg2010} obtained from a scan of an empty field of view (FOV), i.e., with $\vf{M}=0$.

\subsection{The MPI Principle of Particle Detection in 1D}
\label{forwMPI1d}
The effect of the non-linear response of superparamagnetic nanoparticles is explained best in a simplified one-dimensional scenario, cf. Figure \ref{fig:Response}.
Here, we assume a point sample is located at position $x$, i.e., $\rho(y) = \delta(x-y)$. Moreover, we have a single pair of selection and drive coils such that $\vf{H}(x,t) = H(x,t) e_1$
and a single pair of recording coils such that $\vf{R}(x) = R(x) e_1^T$. With this configuration the signal is 
\begin{align}
	s(t) &= - \mu_0 \; m \; R(x) \frac{d}{dt} \Lan \left( \frac{H(x,t)}{H_\sat} \right)
\end{align}
where we used the fact that the Langevin function is a sigmoid (yellow curve in \ref{fig:Response}).  
We assume now that $H(x,t)$ as function of time $t$ oscillates with small amplitude about a reference value $\hat{H}$.
If the reference value $\hat{H}$ is about zero we will receive a non-trivial signal because of the non-linearity of the Langevin function near zero.
Otherwise if $\hat{H}$ is large in magnitude we will receive the zero signal since the Langevin function is almost constant away from zero.
Thus, control over the applied field allows for the detection of the point sample.   
\begin{figure}[t]
\begin{center} 
\includegraphics[width=0.5\linewidth]{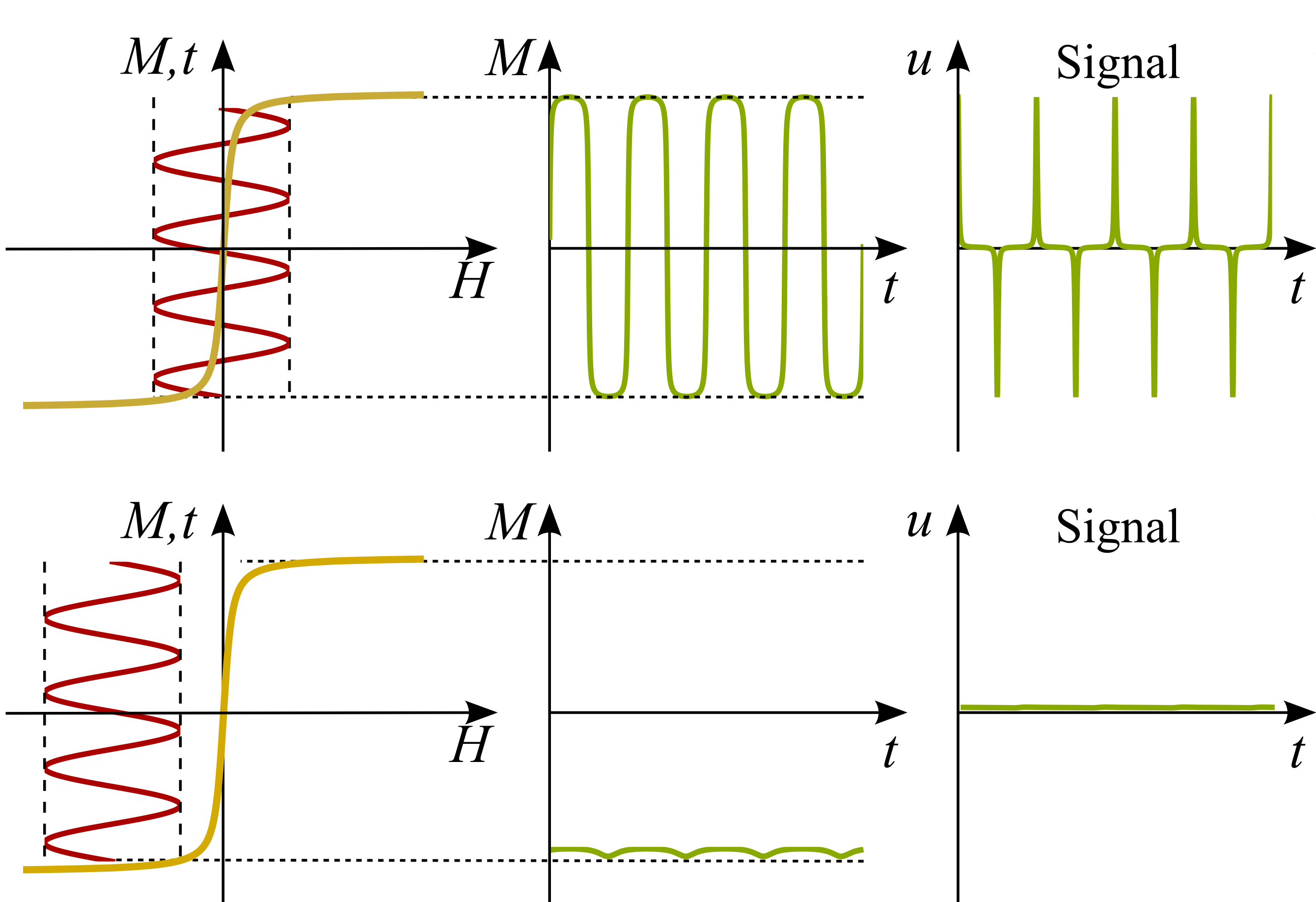}
\caption{1D principle of the non-linear magnetization response of superparamagnetic nanoparticles (courtesy of \cite{KnoppDiss2010}).
Left column: oscillations of $H(t,x)$ as function of time about a reference value $\hat{H}$ in the domain of the sigmoidal Langevin function $\Lan$ (yellow curve).
Middle column: the magnetization response. Right column: the received signal.
Oscillations about $\hat{H} \approx 0$ cause a non-trivial received signal, while oscillations about $|\hat{H}| \gg 0$ cause a zero signal.}
\label{fig:Response}
\end{center}
\end{figure}

\subsection{The MPI Signal Generation in Higher Dimensions}
\label{subsec:MPIForwHighdim}

Let us now consider the general multivariate situation. 
In contrast to the univariate setting where one scans along a straight line, 
it becomes important for the multivariate setting
that the signal is obtained by scanning along a trajectory which covers an area or a volume.

\paragraph{Scanning}
By design, the applied field $\vf{H}$ is the superposition of a static selection field $\vf{H}^S$ and a dynamic drive field $\vf{H}^D$:
\begin{align}
	\vf{H}(x,t) &= \vf{H}^S(x) + \vf{H}^D(x,t).
\end{align}
The static part $\vf{H}^S$ is generated by a direct current in the selection field coils.
The dynamic part is generated via an alternating current in the drive field coils and is described by
\begin{align}
	\vf{H}^D(x,t) &= \vf{P}(x) \vf{I}(t).
\end{align}
Here $\vf{I}(t)$ denotes the electric current in the coils and $\vf{P}(x)$ is the sensitivity profile of the drive field coils 
according to the Biot-Savart law \cite{chikazumi1978physics,jiles1998introduction}.

The locations where $\vf{H} \approx 0$ are important for the detection of particles (cf. Section~\ref{forwMPI1d}) which takes us
directly to the definition of the field free point (FFP) denoted by $r(t)$. 
The FFP is the geometric locus where  $\vf{H}$ vanishes:
\begin{align}\label{eq:equivFFP}
	\vf{H}(r(t),t) &= 0 \qquad \Leftrightarrow \qquad \vf{H}^S(r(t)) + \vf{P}(r(t)) \vf{I}(t) = 0.
\end{align}
From the second equality we learn that the current $\vf{I}(t)$ drives the FFP to scan the object.
Because the FFP trajectory $r(t)$ describes the locations where particles can be detected, the hull of $r(t)$
(under the assumption that $r(t)$ fills a portion of space very well)
defines the actual field of view.

\paragraph{Signal Encoding}
Schomberg \cite{Schomberg2010} points out that by solving the right-hand side of \eqref{eq:equivFFP} for $\vf{I}(t)$, 
$\vf{I}(t) = -\vf{P}(r(t))^{-1} \vf{H}^S(r(t))$, 
the applied field $\vf{H}$ can be described in terms of the FFP rather than the current:
\begin{align}
	\vf{H}(x,t) &= \vf{H}^S(x) - \vf{P}(x) \vf{P}(r(t))^{-1} \vf{H}^S(r(t)).
\end{align}
By design, the sensitivity pattern $\vf{P}$ is almost homogeneous in the FOV which simplifies the expression to
\begin{align}
	\vf{H}(x,t) &= \vf{H}^S(x) - \vf{H}^S(r(t)).
\end{align}
The static field $\vf{H}^S$ is generated by permanent magnets and/or a selection coil pair in Maxwell configuration \cite{chikazumi1978physics,jiles1998introduction}.  
It is therefore linear in $x$ with a constant gradient $\vf{G},$ i.e.,
\begin{align}\label{eqn:staticF}
	\vf{H}^S(x) &= \vf{G} x, & \vf{G} &= g\; \bnd{G}, &
	\bnd{G} &= \left(\begin{matrix}
               	-1 & 0 & 0 \\ 0 & -1 & 0 \\ 0 & 0 & 2
               	\end{matrix}\right).
\end{align}
The factor $g$ denotes the nominal gradient of the static field. 
Finally, the linearity of $\vf{H}^S$ implies that the applied field is of the form  \cite{Schomberg2010,GoodwillConolly2011}
\begin{align}\label{eqn:applH}
	\vf{H}(x,t) &= \vf{G}(x - r(t)), \quad \text{and also that} \quad \vf{I}(t) = - \vf{P}^{-1}\vf{G}r(t).
\end{align}

Again by design, the sensitivity pattern $\vf{R}$ of the recording coils is also almost homogeneous in the FOV.
By substituting \eqref{eqn:applH} into \eqref{eqn:magnet} and \eqref{eq:MPIproblemInTime}, the MPI signal is given by
\begin{align}\label{eqn:signal}
  \vf{s}(t) 
  &= \mu_0 \; m \; \vf{R} 
  \frac{d}{dt} \int\limits_{\R^3} \rho(x) \Lan \left( \frac{|\vf{G}(r(t)-x)|}{H_\sat} \right) \; \frac{\vf{G}(r(t)-x)}{|\vf{G}(r(t)-x)|} \; dx.
\end{align}

\subsection{Decomposition of the Signal Generation Process and Derivation of MPI Core Operator}\label{subsec:DecomposingMPIo}

Until now, the signal is described in terms of the trajectory of the field free point. Now, we see that actually less information is required; it turns out that only phase space information is needed meaning that the signal is uniquely determined by the location and the tangent to the trajectory. This reveals the core part of the MPI signal generation process, and makes the model independent of the particular trajectory under consideration.   
 
In order to extract the essential part in \eqref{eqn:signal}, we employ the spatial flux function $\vf{\Phi}$ defined by
\begin{align}\label{eqn:spatflux}
  \vf{\Phi}(r) &= \int\limits_{\R^3} \rho(x) \Lan \left( \frac{|\vf{G}(r-x)|}{H_\sat} \right) \; \frac{\vf{G}(r-x)}{|\vf{G}(r-x)|} \; dx.
\end{align}

Using $\vf{\Phi},$ we can rewrite \eqref{eqn:signal} as
\begin{align}
  \vf{s}(t) 
  &= \mu_0 \; m \; \vf{R} \frac{d}{dt} \vf{\Phi}(r(t)) = \mu_0 \; m \; \vf{R} \cdot \nabla_r \vf{\Phi}(r(t)) \cdot \dot{r}(t). 
\end{align}  
We derive the MPI core operator $\MPI[\rho]$ as the part which operates on $\rho,$ i.e.,  
\begin{align} \label{eq:MPIcoreOperator}
  \MPI[\rho](r) =
  \nabla_r \vf{\Phi}(r) &= \int\limits_{\R^3} \rho(x) \; \nabla_r \left(\Lan \left( \frac{|\vf{G}(r-x)|}{H_\sat} \right) \; \frac{\vf{G}(r-x)}{|\vf{G}(r-x)|}\right) \; dx.
\end{align}
We see that $\MPI[\rho]$ is a spatial convolution of $\rho$ with a matrix-valued kernel.
The convolution evaluated at the FFP $r(t)$ produces a matrix which is a applied to the velocity vector $v=\dot{r}(t)$ of the FFP. 
The central advantage of \eqref{eq:MPIcoreOperator} is that it is completely independent of a particular trajectory, it
describes MPI in space and gets rid of all decorative elements. Plugging in a trajectory $r(t)$ into \eqref{eq:MPIcoreOperator} the signal is obtained by 
\begin{align}
\vf{s}(t) &= \mu_0 \; m \; \vf{R} \cdot \MPI[\rho](r(t)) \cdot \dot{r}(t). 
\end{align}
A basic first step in this direction was already done by Goodwill et al. \cite{GoodwillConolly2011} 
in their \emph{$x$-space formulation} where they relate the signal description with spatial convolution. 

\paragraph{Scaling}
Now, we want to determine a single dimensionless parameter $h$ which characterizes the resolution of the MPI scanning setup.
Because the MPI core operator \eqref{eq:MPIcoreOperator}
depends on many different parameters, namely the nominal gradient $g$, the size $L$ of the field of view,
and a saturation parameter $H_\sat$ of the particles (which in turn is a function of the temperature $T$, the particle diameter $d$,
and the saturation magnetization $M_\sat$), we combine them in a single parameter $h$.
The smaller the parameter $h$ the better is the resolution. We will see here that in a typical setup $h$ is about $h \approx 10^{-2}$.
In Section~\ref{sec:AnaysisMPIop} which is concerned with the analysis of the MPI core operator we will study the idealization limit $h \to 0$
and its relation to the case of $h>0$.

The support of the particle concentration function $\rho$ is usually contained in the field of view $\Omega$ and of the same size, which determines the length scale.
As the field of view is a cube, we write it as a linear deformation of the standard cube $[-1,1]^3$
\begin{align}
 \Omega &= L \; \bnd{G}^{-1} \; [-1,1]^3,
\end{align}
with $\bnd{G}$ as of \eqref{eqn:staticF} and the length scale $L$ of the physical field of view. We use this now to transform variables in \eqref{eqn:signal} by
\begin{align}
 x &= L \; \bnd{G}^{-1} \; \nd{x} & r &= L \; \bnd{G}^{-1} \; \nd{r}.
\end{align}
That is $\nd{x}$ and $\nd{r}$ are elements of a non-dimensional field of view $\nd{\Omega}= [-1,1]^3$.
Moreover, we write the receive coil sensitivity as $R = \|R\|_2 \; \bnd{R}$ with a dimensionless matrix $\bnd{R}$ which gives only the receive pattern.
Equation \eqref{eqn:signal} transforms then to

\begin{align}\label{eqn:signal2}
  \vf{s}(t) &= \bnd{R} \frac{d}{dt} \int\limits_{\R^3}  \rd{\rho}(\nd{x}) \; 
  \Lan \left( \frac{ |\nd{r}(t)-\nd{x}|}{h} \right) \; \frac{\nd{r}(t)-\nd{x}}{|\nd{r}(t)-\nd{x}|} \; d\nd{x},
\end{align}
with
\begin{align}
  \rd{\rho}(\nd{x}) &:= c \; \rho(L \; \bnd{G}^{-1} \; \nd{x}), &
  c &:= \mu_0 \; m \; \frac{L^3}{\det \bnd{G}} \; \|R\|_2,  &
  h &:= \frac{H_\sat}{g \; L},
\end{align}
where $c$ is a dimensional constant, while $h$ is now a dimensionless shape parameter. 
By refering to the transformed spatial flux function
\begin{align}\label{eqn:spatflux2}
 \brd{\Phi}(\nd{r}) &:= \int\limits_{\R^3}  \rd{\rho}(\nd{x}) \; 
 \Lan \left( \frac{ |\nd{r}-\nd{x}|}{h} \right) \; \frac{\nd{r}-\nd{x}}{|\nd{r}-\nd{x}|} \; d\nd{x},
\end{align}
we have 
\begin{align}\label{eqn:signal3}
 \vf{s}(t) &= \bnd{R} \; \nabla_{\nd{r}} \brd{\Phi}(\nd{r}) \; \dot{\nd{r}}(t).
\end{align}

\begin{table}
	\centering
	\begin{tabular}{|c|c|c|c|}
		\hline
		param. & value & meaning & reference \\
		\hline
		$k_b$ & $1.38 \; 10^{-23} \; \frac{Nm}{K}$ & Boltzmann constant & \\
		$\mu_0$ & $4 \pi \; 10^{-7} \; \frac{N}{A^2}$ & vacuum magnetic permeability & \\
		$T$ & $310 K$ & temperature of human body & assumption \\
		$M_\sat$ & $0.6 \frac{T}{\mu_0}$ & saturation magnetization of particles & 
		cf. \cite{GoodwillConolly2010}\\
		$d$ & $20-30 \; 10^{-9} \;m$ & particle diameter & cf. \cite{BuzugKnopp2012} \\
		$g$ & $5.5 \; \frac{T}{\mu_0 \; m}$ & nominal gradient of static field & cf.  \cite{BuzugKnopp2012}\\
		$L$ & $20 \; 10^{-3} \; m$ & length of FOV & cf. \cite{BuzugKnopp2012}\\
		\hline
	\end{tabular}
	\caption{Parameters of the Langevin function from the literature.}\label{Tab:params}
\end{table}

\begin{figure}[t]
	\begin{center} 
		\includegraphics[width=0.70\textwidth]{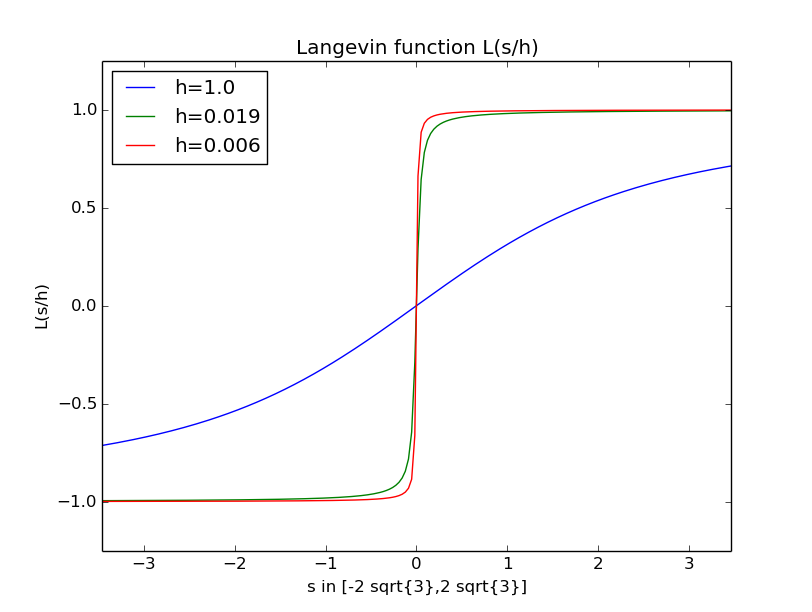}
		\caption{Langevin function $\Lan(s/h)$, $s \in [-2 \sqrt{3},2 \sqrt{3}]$ for different $h$.}
		\label{fig:Langevin}
	\end{center}
\end{figure}

The spatial resolution of an MPI scanner depends on how well the sigmoidal Langevin-function approximates the sign-function
on the relevant domain. By the non-dimensionalization above we obtain, with $s := |\nd{r}-\nd{x}|$, $s \in [0,2 \sqrt{3}]$, the diameter
of $\nd{\Omega}$ as the relevant domain of $\Lan(s/h)$. 
The size of $h$ determines the shape of $\Lan(s/h)$ on that domain. With smaller values for $h$ the Langevin-function gets closer to the sign-function. 
We have the following relationship between $h$ and the involved physical parameters:
\begin{align}
 h &= \frac{H_\sat}{g \; L}, & H_\sat &= \frac{k_b \; T}{\mu_0 \; M_\sat \; \frac{\pi}{6} d^3}.
\end{align}
$H_\sat$ is a combined parameter which describes 
the spherical superparamagnetic particles  
and is a function of the temperature $T$, the particle diameter $d$,
and the saturation magnetization $M_\sat$.
Table~\ref{Tab:params} lists values reported in the literature for the different parameters that determine $h$.
Based on these values we obtain with $20nm \leq d \leq 30nm$, the following range of $h$
\begin{align}
 0.019 \geq h \geq 0.006,
\end{align}
giving a quite good approximation of the sign-function, see Figure~\ref{fig:Langevin}.

\paragraph{Influence of the Electrical Current on the Scan Trajectory} 
The scan trajectory is controlled via the electrical current $\vf{I}(t)$. In dimensional form we have
\begin{align}\label{eqn:current}
 \vf{P} \; \vf{I}(t) &= -\vf{G} \;r(t).
\end{align}
By decomposing the current as well as the drive coil sensitivity into magnitude and direction, 
i.e. $\vf{I}(t) = I \; \bnd{I}(t)$ and $\vf{P} = \|\vf{P}\|_2 \; \bnd{P}$ we obtain 
a dimensionless form of \eqref{eqn:current}:
\begin{align}
 I \; \|\vf{P}\|_2 \; \bnd{P} \; \bnd{I}(t) &= - g\; L \; \nd{r}(t).
\end{align}
That means in order to scan a FOV of length scale $L$ we would need a magnitude of 
\begin{align}
 I &= -\frac{g}{\|\vf{P}\|_2} \; L
\end{align}
for the electrical current (assuming that $\bnd{P}$ does not shear too much).
Given that, then, up to a linear transformation, the dimensionless scan trajectory
\begin{align}\label{eqn:currentnondim}
 \nd{r}(t) &= \bnd{P} \; \bnd{I}(t)
\end{align}
inherits the signal shape of $\bnd{I}(t)$.

\paragraph{Lower Dimensional Particle Distributions} 
So far we have considered the signal generation and the MPI core operator in 3D.
But for lower dimensional particle distributions we can employ lower dimensional models, i.e. 1D or 2D.
Here, we provide a common canonical form valid for all three cases. 

We note that the physics of MPI is intrinsically 3D. Hence, even if the particle distribution is lower dimensional,
we initially have to start from a 3D setup.
Let us first assume that we are given a one-dimensional particle distribution 
$\rd{\rho}(\nd{x}) = \rho(\nd{x}_1) \; \delta(\nd{x}_2) \; \delta(\nd{x}_3)$. Here, one
would use a single pair of recording coils, i.e. $\bnd{R} = e_1^T$, and a single pair of drive coils, i.e. $\nd{r} = \nd{r}_1 e_1$.
In that case we have that $\vf{s}_1(t) = \frac{d}{d\nd{r}_1} \brd{\Phi}_1(\nd{r}_1) \; \dot{\nd{r}}_1(t),$ where
\begin{align}\label{eqn:spatflux3}
\brd{\Phi}_1(\nd{r}_1) &:= \bnd{R} \brd{\Phi}(\nd{r}) = \int\limits_{\R}  \rd{\rho}(\nd{x}_1) \; 
\Lan \left( \frac{|\nd{r}_1-\nd{x}_1|}{h} \right) \; \sign (\nd{r}_1-\nd{x}_1) \; d\nd{x}_1.
\end{align}
Accordingly we proceed in the case of two-dimensional particle distributions $\rd{\rho}(\nd{x}) = \rho(\nd{x}_1,\nd{x}_2)$ $\delta(\nd{x}_3)$.
In summary, we get the canonical form 
\begin{align}\label{eqn:common}
\brd{\Phi}(\nd{r}) &:= \int\limits_{\R^n}  \rd{\rho}(\nd{x}) \; 
\Lan \left( \frac{|\nd{r}-\nd{x}|}{h} \right) \; \frac{\nd{r}-\nd{x}}{|\nd{r}-\nd{x}|} \; d\nd{x}, &
\vf{s}(t) &= \bnd{R} \; \nabla_{\nd{r}} \brd{\Phi}(\nd{r}) \; \dot{\nd{r}}(t),
\end{align}
with a full rank matrix $\bnd{R} \in \R^{n \times n}$ and $\nd{r} \in [-1,1]^n$, which is valid for any dimension $n \in \{1,2,3\}$.

\subsection{Relation to the Previously Proposed Models}
\label{subsec:RelExistingModels}
Rahmer et al. \cite{Rahemeretal2009} consider a 1D scenario and study the Fourier series of the signal derived from a delta distribution. 
In the idealized 1D case, they derive a representation of the convolution kernel in terms of Chebyshev polynomials.
In the 2D and 3D scenario they study the Fourier transform of the signal obtained from inserting the Lissajou trajectory into the MPI core operator.
But a similar representation of the convolution kernel as in 1D is not available, see \cite{Rahmer_etal2012}.  

In contrast to \cite{Rahemeretal2009}, Goodwill and Conolly \cite{GoodwillConolly2010, GoodwillConolly2011} 
study the received signal in terms of the trajectory of the field free point.
In \cite{GoodwillConolly2010, GoodwillConolly2011} no Fourier transforms are considered and a general trajectory is inserted into the MPI core operator.
For the 1D case, it was shown in \cite{Gruettner_etal2013} that the x-space formulation is mathematically equivalent 
to the frequency space formulation of \cite{Rahemeretal2009}. 

The approach of Schomberg \cite{Schomberg2010} studies the integral of the received signal instead of the signal itself. 
The result is a formulation of convolution type involving the spatial flux $\vf{\Phi}$ of the MPI core operator. 

Summing up, all previous approaches are based on studying the concatenation of the MPI core operator \eqref{eq:MPIcoreOperator}
or the related flux \eqref{eqn:spatflux}
with some individually different operators.

\section{Analysis of the MPI Core Operator}\label{sec:AnaysisMPIop}
As seen in the previous section, the central task in analyzing MPI is to study 
the MPI core operator $A_h[\rho]$ which operates on the particle density $\rho$ .
The topic of this section is to start this investigation.
The MPI core operator is the derivative of the spatial flux function $\vf{\Phi}$.
The starting point for our analysis is \eqref{eqn:common}, where we from now on omit hats and bars in the notation.
Moreover, to have a handy notation, we consider the signal multiplied by $\bnd{R}^{-1}$ and call it $\vf{s}$ again.

Precisely, in this section, we consider 
\begin{align}\label{eqn:common2}
 \vf{\Phi}(r) &= \int\limits_{\R^n}  \rho(x) \; 
 \Lan \left( \frac{|r-x|}{h} \right) \; \frac{r-x}{|r-x|} \; dx, &
 \vf{s}(t) &= \nabla_{r} \vf{\Phi}(r) \; \dot{r}(t).
\end{align} 
Then the mathematical form of the MPI core operator is
\begin{align}\label{eqn:matfield}
A_h[\rho](r) =  \nabla_{r} \vf{\Phi}(r) = \int\limits_{\R^n}  \rho(x) \; 
\nabla_r \left( \Lan \left( \frac{|r-x|}{h} \right) \; \frac{r-x}{|r-x|} \right)\; dx. &
\end{align}
The operator $A_h$ acts on the density function $\rho$ and produces the matrix-valued function $A_h[\rho]$ which is a function of space $r$.

The time-dependent signal $\vf{s}(t)$ is obtained by inserting the 
scan trajectory in the spatial argument $r=r(t)$ and multiplying by the velocity vector $v= \dot{r}(t)$, i.e.,
\begin{align}\label{eq:MPImathProblemWithTime}
 \vf{s} &= A_h[\rho](r) \; v.
\end{align}

We see from \eqref{eqn:matfield} that the information on $\rho$ is encoded in the matrix-valued function  $A_h[\rho](r)$,
whereas the vectors $v$ in turn depend only on the employed scan trajectories which are independent of $\rho$.
This observation motivates the focus on the core operator $A_h$ and the relationship between $A_h[\rho]$ and $\rho$
which we investigate here.  
By scanning the object with different trajectories we can in principle gain full knowledge about $A_h[\rho](r)$;
practically, denser trajectories which visit each point several times allow for better approximations.

We start out in Section \ref{sec:idealCoreOp} to study the MPI core operator $A_h$ in the limiting case $h \to 0$. 
This reveals the central part of the core operator by freeing it from its blurring part; in particular, 
we observe dependence on the dimension and see that the previously conjectured Dirac property only holds in 1D but is no longer valid in a multivariate setting.
Then in Section~\ref{sec:nonIdealCoreOp}, we consider the non-idealized situation $h>0$ taking the blur
into account.

\subsection{The Idealized MPI Core Operator - the High Resolution Limit $h \to 0$ } \label{sec:idealCoreOp}

We learned in Section \ref{sec:MathematicalModel}  that the resolution parameter $h$ is of size $10^{-2}$.
Hence, the question of what happens in the ideal high resolution limit as $h \to 0$ is also interesting
and natural from an applied point of view. From a theoretical point of view, it turns out that
there are several relations between the idealized and the non-idealized MPI operator:
e.g., in the physically most important 3D case, the trace of  $A_h$ for a particular $h$ is just the concatenation of the trace in the idealization limit with a blurring operator.  
We will see this in Theorem~\ref{thm:RelaToLimit2} later on. 

Equation \eqref{eqn:matfield} tells us that $A_h$ is a convolution operator which operates linearly on $\rho$. 
In 1D, the kernel is scalar yielding a standard convolution operator and a standard deconvolution problem.
In contrast the multivariate setting comes with a matrix-valued kernel giving a non-standard convolution operator.
But, as can be seen in \eqref{eqn:matfield}, the components of the matrix-valued kernel act seperately on the scalar-valued $\rho$.
The striking point here is that the trace of $A_h[\rho]$ -- as we will see -- contains already all the information about $\rho$.
This has not been realized before but is the crucial step because
application of the matrix-trace to $A_h[\rho]$ gives a scalar deconvolution problem.
These facts allow for the derivation of reconstruction formulae. 
Moreover, we notice an interesting connnection to the fundamental solution of the Laplace equation.

\begin{theorem}\label{thm:IdealizationTheorem} 
The idealization limit $\alpha[\rho](r)= \lim_{h \to 0} \alpha_h[\rho](r)$ of the matrix-trace $\alpha_h[\rho](r) := \trace A_h[\rho](r)$ is of the form 
\begin{align}\label{eqn:tracelimit}
\alpha[\rho](r) &= \int\limits_{\R^n}   \rho(x) \; \kappa(r-x) \; dx.
\end{align}

In the multivariate setting, $n>1$, the limiting convolution kernel is
\begin{align}
\kappa(r-x) &:= \diver_r \left( \frac{r-x}{|r-x|} \right) = \frac{n-1}{|r-x|},
\end{align}
and \eqref{eqn:tracelimit} holds, for any $\rho$ in the space $\BV_0(\Omega)$ of functions of bounded variation with zero boundary, where $\Omega$ is the field of view. 

In the univariate setting, $n=1$, we have a Dirac-kernel
\begin{align}
\kappa(r-x) &:= 2 \; \delta(r-x),
\end{align}
and \eqref{eqn:tracelimit} holds for continuous functions $\rho$ with zero boundary.

Depending on the spatial dimension $n \in \{1,2,3\}$, we have three different relations of 
the convolution kernel to the fundamental solution $\Phi(r,x)$ of the Laplace equation, $-\laplace_r \Phi = \delta(r-x)$.
In 3D, we have 
\begin{align}\label{eq:kappa3D}
\kappa(r-x) = 8\pi \; \Phi(r,x)  \quad \text{with} \quad \Phi(r,x) = \frac{1}{4\pi \; |r-x|}.
\end{align}
In 2D, we have
\begin{align}\label{eq:kappa2D}
\kappa(r-x) = -2\pi \; \nabla_r \Phi(r,x) \cdot \frac{r-x}{|r-x|} 
 \quad \text{with} \quad \Phi(r,x) = -\frac{1}{2\pi}\log(|r-x|).
\end{align}
In 1D, we have
\begin{align}\label{eq:kappa1D}
\kappa(r-x) = -2 \frac{d^2}{dr^2} \Phi(r,x)  
\quad \text{with} \quad
\Phi(r,x) = -|r-x|/2.
\end{align}
	
\end{theorem}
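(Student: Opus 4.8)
The plan is to exploit the observation that applying the matrix trace converts the matrix-valued kernel into the divergence of a \emph{radial} vector field, which is exactly what makes the limit $h\to0$ tractable. Since the trace is linear and commutes with $\nabla_r$ and with the integral, and since $\trace\nabla_r=\diver_r$, I would first rewrite
\[
\alpha_h[\rho](r)=\int_{\R^n}\rho(x)\,\kappa_h(r-x)\,dx,\qquad
\kappa_h(y)=\diver_y\Bigl(\Lan(|y|/h)\,\tfrac{y}{|y|}\Bigr).
\]
Evaluating the divergence of this radial field in $\R^n$ gives the explicit splitting
\[
\kappa_h(y)=\tfrac1h\,\Lan'(|y|/h)+\tfrac{n-1}{|y|}\,\Lan(|y|/h),
\]
which isolates a part concentrating at the origin from a part converging pointwise to $\tfrac{n-1}{|y|}$. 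This splitting already reveals the dichotomy: the concentrating term, integrated against a test function, carries mass $\Order(h^{n-1})$, so it survives only when $n=1$.

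For the multivariate case $n>1$ I would avoid taking a naive pointwise limit of $\kappa_h$ and instead integrate by parts. The field $F_h(y)=\Lan(|y|/h)\,y/|y|$ is bounded by $1$ and continuous with $F_h(0)=0$ (because $\Lan(0)=0$), and $\diver_r F_h(r-x)=-\diver_x F_h(r-x)$, so for $\rho\in\BV_0(\Omega)$ the boundary term vanishes by the zero trace and
\[
\alpha_h[\rho](r)=\int_\Omega \nabla\rho(x)\cdot \Lan(|r-x|/h)\,\tfrac{r-x}{|r-x|}\,dx,
\]
where $\nabla\rho$ is the associated finite vector measure. Since $\Lan(|r-x|/h)\to1$ for $x\ne r$ while the integrand stays dominated by $|\nabla\rho|$, dominated convergence yields $\alpha[\rho](r)=\int_\Omega\nabla\rho(x)\cdot\tfrac{r-x}{|r-x|}\,dx$, and integrating by parts back (excising a shrinking ball around $r$) recovers the stated kernel $\tfrac{n-1}{|r-x|}=\diver_r\tfrac{r-x}{|r-x|}$.

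In the univariate case $n=1$ the factor $(n-1)/|y|$ drops out, leaving $\kappa_h(y)=\tfrac1h\Lan'(|y|/h)$. I would verify directly that this is a nonnegative approximate identity of total mass $2$: positivity follows from $\Lan'>0$, the mass from $\int_\R\tfrac1h\Lan'(|y|/h)\,dy=2(\Lan(\infty)-\Lan(0))=2$ after the substitution $u=|y|/h$, and concentration from $\int_{|y|>\delta}\kappa_h=2(1-\Lan(\delta/h))\to0$. Hence for continuous $\rho$ with zero boundary the convolution converges to $2\rho(r)=\int\rho(x)\,2\,\delta(r-x)\,dx$, giving the Dirac kernel. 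The three identities with the Laplace fundamental solution are then immediate verifications: in 3D, $8\pi\,\Phi(r,x)=2/|r-x|$; in 2D, $\nabla_r\Phi=-\tfrac{1}{2\pi}\tfrac{r-x}{|r-x|^2}$ so $-2\pi\,\nabla_r\Phi\cdot\tfrac{r-x}{|r-x|}=1/|r-x|$; and in 1D, $\tfrac{d^2}{dr^2}\Phi=-\tfrac12\tfrac{d^2}{dr^2}|r-x|=-\delta(r-x)$.

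I expect the main obstacle to be the rigorous interchange of the limit $h\to0$ with the integral in the multivariate step, carried out against the total-variation measure $|\nabla\rho|$. The delicate points are that the limiting field $\tfrac{r-x}{|r-x|}$ is discontinuous at $x=r$ (so the $\BV$ measure must place no obstructing mass there, which holds since jump sets are of codimension one) and that integrating by parts back involves a kernel $\tfrac{n-1}{|r-x|}$ that is only locally integrable. The latter forces the excision argument, where one must check that the surface terms over the sphere of radius $\epsilon$ around $r$ are $\Order(\epsilon^{n-1})$ and vanish for $n\ge2$. Keeping careful track of the $\Order(h^{n-1})$ mass of the concentrating term is what ultimately explains why the Dirac property persists in 1D but degenerates into the $\tfrac{n-1}{|r-x|}$ kernel in higher dimensions.
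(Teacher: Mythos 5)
Your proposal is correct and follows essentially the same route as the paper's own proof: swap $\nabla_r$ for $-\nabla_x$, use that the trace turns the gradient into a divergence, integrate by parts onto $\rho$, pass to the limit $h\to0$ by dominated convergence against $\nabla\rho$, integrate back to recover $\diver_r\bigl(\tfrac{r-x}{|r-x|}\bigr)=\tfrac{n-1}{|r-x|}$, and verify the three fundamental-solution identities by direct computation. The only minor differences are cosmetic rather than structural: you work directly with the vector measure $D\rho$ for $\rho\in\BV_0(\Omega)$ where the paper proves the identity for $\rho\in C_c^1(\Omega)$ and extends by density in the strict topology, and in 1D you verify quantitatively that $\tfrac1h\Lan'(|\cdot|/h)$ is an approximate identity of mass $2$ where the paper invokes $\tfrac{d}{dr}\sign(r-x)=2\,\delta(r-x)$ distributionally plus density (note also that your heuristic aside claiming the concentrating term carries local mass $\Order(h^{n-1})$ is quantitatively off --- since $\Lan'(u)\sim u^{-2}$ at infinity the local mass is of order $h\log(1/h)$ in 2D and $h$ in 3D --- but your actual argument never relies on it).
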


Before proving this statement, we record some immediate consequences. The first consequence is the following reconstruction formula.

\begin{theorem}\label{thm:RecoIdeal} (Reconstruction Formula for the Idealized Case.)
Suppose that noise-free data $F(r) =  A_0[\rho](r)$ at each point $r$ is given for the idealized scenario, i.e. $A_0[\rho](r)= \lim_{h \to 0} A_h[\rho](r)$. 
Then,
\begin{align}
   \rho   =  \kappa^{-1} \circ \trace A_0[\rho].
\end{align}
That means, we take the pointwise trace and then deconvolve w.r.t. the  dimension-dependent $\kappa$ given by 
\eqref{eq:kappa3D},\eqref{eq:kappa2D}, or \eqref{eq:kappa1D}. 

\end{theorem}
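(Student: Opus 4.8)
The plan is to read the formula off as a direct corollary of Theorem~\ref{thm:IdealizationTheorem}, so the argument is short and the only real work is to justify that the two operations involved --- taking the pointwise matrix trace and deconvolving against $\kappa$ --- can be chained and then inverted as claimed.

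First I would compute the trace of the given data pointwise. Since the matrix trace is a finite linear combination of the diagonal entries it is continuous, and therefore commutes with the limit $h \to 0$ that defines $A_0$:
\begin{align*}
\trace A_0[\rho](r) = \trace\!\left(\lim_{h\to 0} A_h[\rho](r)\right) = \lim_{h\to 0}\trace A_h[\rho](r) = \lim_{h\to 0}\alpha_h[\rho](r) = \alpha[\rho](r).
\end{align*}
Thus applying $\trace$ to the noise-free data $F = A_0[\rho]$ recovers exactly the scalar quantity $\alpha[\rho]$ analysed in Theorem~\ref{thm:IdealizationTheorem}. By that theorem, $\alpha[\rho] = \kappa * \rho$; that is, the map $\rho \mapsto \trace A_0[\rho]$ is precisely the convolution operator with kernel $\kappa$.

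Next I would invert this convolution. Since $\rho \mapsto \kappa * \rho$ is linear, recovering $\rho$ amounts to applying its inverse $\kappa^{-1}$, which gives $\rho = \kappa^{-1}(\trace A_0[\rho]) = (\kappa^{-1}\circ\trace)(A_0[\rho])$, as asserted. The concrete form of $\kappa^{-1}$ is dictated by the dimension-dependent identification of $\kappa$ with the fundamental solution $\Phi$ of Laplace's equation in \eqref{eq:kappa3D}--\eqref{eq:kappa1D}: in 3D, where $\kappa = 8\pi\,\Phi$ and $-\laplace_r \Phi = \delta(r-x)$, deconvolution is (up to the constant $-1/8\pi$) an application of $-\laplace$, so $\rho = -\tfrac{1}{8\pi}\laplace\,(\trace A_0[\rho])$; in 1D the Dirac kernel $\kappa = 2\delta$ reduces $\kappa^{-1}$ to a trivial rescaling by $1/2$; the 2D case is genuinely different, since $\kappa$ is tied to $\Phi$ only through a directional derivative, making $\kappa^{-1}$ a correspondingly more intricate nonlocal operator.

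The main obstacle is not the bookkeeping above but showing that $\kappa^{-1}$ is well defined, i.e. that $\rho \mapsto \kappa * \rho$ is genuinely invertible on the relevant function class ($\BV_0(\Omega)$ for $n>1$, continuous zero-boundary functions for $n=1$). I would anchor this on the Laplace-kernel identities of Theorem~\ref{thm:IdealizationTheorem}: in 3D invertibility is inherited from the standard fact that convolution $\Phi * (\dotarg)$ with the Newtonian potential is inverted by $-\laplace$, and one must check that the reconstructed $\rho$ lands back in the admissible class; in 2D one has to make precise in which sense the directional-derivative relation can be unwound so as to determine $\rho$ uniquely. At the level of this corollary, however, the existence of $\kappa^{-1}$ may simply be taken as the definition of the deconvolution step, leaving the functional-analytic well-posedness --- and indeed the severe ill-posedness, cf.\ Theorem~\ref{thm:SevIllposed} --- to the separate analysis.
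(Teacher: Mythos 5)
Your proposal is correct and follows essentially the same route as the paper's own proof: apply the pointwise matrix trace, invoke Theorem~\ref{thm:IdealizationTheorem} to identify $\trace A_0[\rho] = \kappa \ast \rho$, and invert the convolution. Your additional remarks --- that the trace commutes with the limit $h \to 0$, the explicit form $\rho = -\tfrac{1}{8\pi}\laplace\,(\trace A_0[\rho])$ in 3D, and the caveat that $\kappa^{-1}$ is defined as the deconvolution step with well-posedness deferred to the ill-posedness analysis --- are sound elaborations of details the paper leaves implicit, not a different argument.
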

\begin{proof}
 Given the matrix-valued data $F = A_0[\rho]$, we apply the matrix-trace. As a consequence of Theorem~\ref{thm:IdealizationTheorem} we obtain the deconvolution problem
 \begin{align}
  \trace F &= \kappa \ast \rho.
 \end{align}
 By inversion we obtain $\rho   =  \kappa^{-1} \circ  \trace F = \kappa^{-1} \circ \trace A_0[\rho]$.\hfill
\end{proof}

A next consequence is the following statement on ill-posedness in the sense of inverse problems 
\cite{Engl_Regularization_of_Inverse_Problems,kirsch2011introduction}. 

\begin{theorem}\label{thm:illposed} (Ill-Posedness.)
  Even the idealized MPI problem of Theorem~\ref{thm:RecoIdeal} 
  is ill-posed in 2D and 3D.
  (This is not the case in 1D where $\kappa$ is a Dirac distribution.)
  Depending on the dimension the degree of ill-posedness, i.e., the order of gained Sobolev smoothness of the forward operator,  
  is one in 2D, and two in 3D.
\end{theorem}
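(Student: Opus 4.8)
The plan is to reduce the statement, via Theorem~\ref{thm:RecoIdeal}, to a Fourier-multiplier analysis of the scalar convolution operator obtained after taking the matrix trace, and then to read off the smoothing order directly from the high-frequency decay of the multiplier. First I would record that, by Theorem~\ref{thm:RecoIdeal} and Theorem~\ref{thm:IdealizationTheorem}, the pointwise trace turns the idealized forward map into the scalar convolution $\trace A_0[\rho] = \kappa \ast \rho$, with $\kappa(r) = (n-1)/|r|$ for $n>1$ and $\kappa = 2\delta$ for $n=1$, and that this trace step loses no information. Ill-posedness in the sense of \cite{Engl_Regularization_of_Inverse_Problems,kirsch2011introduction} then means precisely that the inverse $\kappa^{-1}$ is unbounded on the relevant $L^2$-Sobolev scale, equivalently that $\rho \mapsto \kappa \ast \rho$ is smoothing; the degree of ill-posedness is the number of Sobolev derivatives this forward operator gains.

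Next, since $\rho$ is supported in the bounded field of view $\Omega$, I would pass to the Fourier side and invoke the Riesz-potential identity: for $0 < \alpha < n$ the tempered-distributional Fourier transform of $|r|^{-\alpha}$ on $\R^n$ equals $c_{n,\alpha}\,|\xi|^{-(n-\alpha)}$. Taking $\alpha = 1$ (admissible for $n = 2,3$) yields $\widehat{\kappa}(\xi) = (n-1)\,c_{n,1}\,|\xi|^{-(n-1)}$, so convolution with $\kappa$ is a Fourier-multiplier operator with symbol decaying like $|\xi|^{-(n-1)}$; hence it maps $H^s(\R^n) \to H^{s+(n-1)}(\R^n)$ boundedly, gaining exactly $n-1$ derivatives. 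The key observation making this legitimate is that the smoothing order is a purely local, high-frequency phenomenon fixed by the singularity $|r|^{-1}$ at the origin: splitting $\kappa = \chi\kappa + (1-\chi)\kappa$ with $\chi$ a cutoff near $0$, the remainder $(1-\chi)\kappa \ast \cdot$ is infinitely smoothing, so the slow decay of $\kappa$ at infinity contributes only a benign low-frequency correction and does not alter the order.

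For the conclusion, I would specialize the dimension. For $n=3$ the symbol is $\sim |\xi|^{-2}$, so the forward operator gains two derivatives and $\kappa^{-1}$ amplifies high frequencies by $|\xi|^2$ and is unbounded, giving degree of ill-posedness two; this is consistent with \eqref{eq:kappa3D}, where $\kappa = 8\pi\,\Phi$ and $-\laplace\Phi = \delta$ force $\kappa^{-1} = -\laplace/(8\pi)$, a second-order operator. For $n=2$ the symbol is $\sim |\xi|^{-1}$, so one derivative is gained, $\kappa^{-1}$ behaves like $(-\laplace)^{1/2}$, and the degree is one, in agreement with \eqref{eq:kappa2D}. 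For $n=1$, $\kappa = 2\delta$ has the constant symbol $2$, so $\kappa^{-1} = \tfrac{1}{2}\id$ is bounded and the problem is well-posed, exactly as the parenthetical claim asserts.

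The hard part will be making the Fourier--Sobolev argument fully rigorous on a bounded domain for the rough class $\BV_0(\Omega)$: the kernel $|r|^{-1}$ is not integrable at infinity, so its transform must be taken distributionally, and one must cleanly separate the local singularity that fixes the smoothing order from the global tail and from boundary effects of $\Omega$. Likewise, formulating ill-posedness precisely -- via the unboundedness of $\kappa^{-1}$ or, equivalently, the algebraic decay rate of the singular values of the forward operator -- and verifying that this rate is insensitive to the above domain and regularity technicalities, is the step I expect to demand the most care.
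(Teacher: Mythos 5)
Your argument is correct, and it takes a genuinely different route from the paper's. The paper's own proof is a two-line reduction: it cites \cite{kirsch2011introduction} for the fact that the degree of ill-posedness attached to the fundamental solution $\Phi$ of the Laplacian is two, and then reads the claim off the identities \eqref{eq:kappa3D} and \eqref{eq:kappa2D} --- in 3D the kernel $\kappa$ \emph{is} $8\pi\,\Phi$, giving order two, while in 2D $\kappa$ is a first directional derivative of $\Phi$, so one order of smoothing is spent and one remains. You instead compute the Fourier multiplier of the trace operator directly from the Riesz-potential identity $\widehat{|\cdot|^{-1}}(\xi) = c_{n,1}\,|\xi|^{-(n-1)}$, obtaining the unified statement that convolution with $\kappa$ gains exactly $n-1$ Sobolev derivatives, which specializes to two in 3D, one in 2D, and zero in 1D (where $\kappa = 2\delta$ has constant symbol). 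What your route buys: it is self-contained rather than resting on a cited property of $\Phi$; it handles all three dimensions with one formula; and your decomposition $\kappa = \chi\kappa + (1-\chi)\kappa$ honestly addresses two points the paper glosses over silently --- that $\widehat{\kappa}$ must be taken distributionally since $\kappa \notin L^1(\R^n)$, and that the multiplier's blow-up at $\xi = 0$ means boundedness $H^s \to H^{s+n-1}$ is only legitimate for densities supported in the bounded FOV, where the tail $(1-\chi)\kappa$, having a smooth kernel, is infinitely smoothing and hence harmless. What the paper's route buys is brevity and a direct thematic link to Theorem~\ref{thm:IdealizationTheorem}; indeed your 3D sanity check $\kappa^{-1} = -\laplace/(8\pi)$ recovers exactly the paper's reasoning as a special case of your multiplier computation.
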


\begin{proof}
The degree of ill-posedness of the fundamental solution of the Laplace operator is two   
by the properties of the Laplace operator \cite{kirsch2011introduction}. 
Then, the statement is a consequence of \eqref{eq:kappa3D} and \eqref{eq:kappa2D} for the respective dimensions.\hfill
\end{proof}
	
Having recorded these consequences, we turn to the proof of Theorem~\ref{thm:IdealizationTheorem}.
\begin{proof}
We consider the MPI core operator $A_h[\rho]$ and notice that we can swap the gradient w.r.t $r$ for a gradient w.r.t. $x$ (giving a minus sign) to obtain
\begin{align}
  A_h[\rho](r) &= - \int\limits_{\R^n}  \rho(x) \; \nabla_x \left( \Lan \left( \frac{|r-x|}{h} \right) \; \frac{r-x}{|r-x|} \right)\; dx.
\end{align}
Then, the matrix-trace applied to $A_h[\rho]$ turns the gradient into the divergence operator
\begin{align}
 \alpha_h[\rho](r) &= \trace A_h[\rho](r) = - \int\limits_{\R^n}  \rho(x) \; \diver_x \left( \Lan \left( \frac{|r-x|}{h} \right) \; \frac{r-x}{|r-x|} \right)\; dx.
\end{align}

For the next steps we assume $\rho \in C_c^1(\Omega)$, which we relax later. By assumption, $\rho$ has compact support in the FOV, and we apply integration by parts to get 
\begin{align}
 \alpha_h[\rho](r) &:=  \int\limits_{\R^n}  \nabla_x \rho(x) \cdot \frac{r-x}{|r-x|} \; \Lan \left( \frac{|r-x|}{h} \right)\; dx.
\end{align}
As the integrand is dominated by $|\nabla_x \rho(x)|$, we obtain by Lebesgue's theorem on dominated convergence that
\begin{align}
 \lim\limits_{h \to 0} \alpha_h[\rho](r) &= \alpha[\rho](r) = \int\limits_{\R^n}  \nabla_x \rho(x) \cdot \frac{r-x}{|r-x|} \; dx.
\end{align}
Finally, we apply integration by parts again and undo the swap of derivatives which results in
\begin{align}\label{eqn:tracelimit2}
 \alpha[\rho](r) &= \int\limits_{\R^n}   \rho(x) \; \diver_r \left( \frac{r-x}{|r-x|} \right)\; dx.
\end{align}
We now distinguish between the multivariate and the univariate case.

\emph{The multivariate case, $n>1$.} The limiting convolution kernel can be expanded to
\begin{align}
  \kappa(r-x) &:= \diver_r \left( \frac{r-x}{|r-x|} \right) = \frac{n-1}{|r-x|}.
\end{align}
This may be seen by the following two steps: the Jacobian of the function $r/|r|$ is
\begin{align}
 D_r \frac{r}{|r|} &= \frac{1}{|r|} \left( I - \frac{r}{|r|} \frac{r^T}{|r|} \right),
\end{align}
where $I$ is the identity matrix. The divergence $\diver_r (r/|r|) = (n-1)/|r|$ is then given by the trace of the Jacobian.
The dependence on the spatial dimension $n$ is because of $\trace I = n$.
For $n>1$ the result \eqref{eqn:tracelimit2} holds also for functions $\rho \in \BV_0(\Omega)$ because these can be approximated by functions from $C_c^\infty(\Omega)$
in the strict topology on $\BV_0(\Omega);$ cf. \cite{ambrosio2000functions}.

\emph{The univariate case, $n=1$.} Here we have
\begin{align}
  \kappa(r-x) &:= 2 \; \delta(r-x).
\end{align}
This is because for $n=1$ the divergence expression comes down to
\begin{align}
 \diver_r \left( \frac{r-x}{|r-x|} \right) &= \frac{d}{dr}  \sign(r-x) = 2 \; \delta(r-x). 
\end{align}
(The last equality is to be understood in the sense of distributions.)
For $n=1$ the result \eqref{eqn:tracelimit2} holds also for continuous functions $\rho \in C_0(\Omega)$
because these can be approximated by functions from $C_c^\infty(\Omega)$ in the norm topology on $C_0(\Omega)$.

Finally, we relate the convolution kernels to the fundamental solution of the Laplace equation
\begin{align}
 -\laplace_r \Phi &= \delta(r-x)
\end{align}
Depending on the spatial dimension $n \in \{1,2,3\}$ we have three different scenarios:
\begin{enumerate}[a)]
 \item In 3D, $n=3$, the fundamental solution is given by $\Phi(r,x) = \frac{1}{4\pi \; |r-x|}$,
 while the convolution kernel is
 \begin{align}
  \kappa(r-x) &= \frac{2}{|r-x|} = \frac{8\pi}{4\pi\; |r-x|} = 8\pi \; \Phi(r,x).
 \end{align}
 \smallskip
 \item In 2D, $n=2$, the fundamental solution is given by $\Phi(r,x) = -\frac{1}{2\pi}\log(|r-x|)$ and its gradient w.r.t $r$ is
 \begin{align}
  \nabla_r \Phi(r,x) &= -\frac{1}{2\pi \; |r-x|} \frac{r-x}{|r-x|}, &  \nabla_r \Phi(r,x) \cdot \frac{r-x}{|r-x|} &= -\frac{1}{2\pi \; |r-x|}.
 \end{align}
 The convolution kernel is
 \begin{align}
  \kappa(r-x) &= \frac{1}{|r-x|} = -\frac{-2\pi}{2\pi\; |r-x|} = -2\pi \; \nabla_r \Phi(r,x) \cdot \frac{r-x}{|r-x|}.
 \end{align}
 \smallskip
 \item In 1D, $n=1$, the fundamental solution is given by $\Phi(r,x) = -|r-x|/2$.
 The convolution kernel is a Dirac and thus related to the second derivative of the fundamental solution:
 \begin{align}
  \kappa(r-x) &= 2 \; \delta(r-x) = -2 \; \frac{d^2}{dr^2} \Phi(r,x).
 \end{align}
\end{enumerate}
This was the last assertion of the theorem and completes the proof.\hfill
\end{proof}

The Dirac property in 1D has already been reported by Gleich and Weizenecker in \cite{GleichWeizenecker2005}, and was until now assumed to hold in higher dimensions, too.
The news is that the convolution kernels with parameter $h$ in higher dimensions are not Dirac sequences.
In contrast, we end up with non-trivial convolution kernels which are lower-order derivatives of the fundamental solution of the Laplacian.
This makes sophisticated deconvolution techniques necessary even in the ideal high resolution limit, as we have seen in Theorem~\ref{thm:illposed}.

\subsection{The MPI Core Operator - the Non-Ideal Case $h > 0$} 
\label{sec:nonIdealCoreOp} 

In concrete applications, it is, for sure, important to reconstruct in the non-idealized case $h>0$. 
Loosely speaking, it will turn out that a non-zero resolution parameter $h>0$ results in blured data.

We start out by deriving a statement analogous to the first part of Theorem~\ref{thm:IdealizationTheorem}
which shows, that, also in the non-idealized case, the trace  $\alpha_h[\rho]$ 
of the MPI core  operator can be written as a convolution operator with a scalar-valued convolution kernel $\kappa_h$. 
In particular, we see that this convolution kernel is an analytic radially symmetric function.
The analyticity makes the reconstruction problem even severely ill-posed from an inverse problem point of view. 

\begin{theorem}\label{thm:NonIdealizationTheorem}
  The trace $\alpha_h[\rho]$  of the MPI core operator $A_h[\rho]$ is given by 	
  \begin{align}\label{eqn:nonidealConv}
    \alpha_h[\rho](r) &= \int\limits_{\R^n}   \rho(x) \; \kappa_h(r-x)\; dx,
  \end{align}
  where the convolution kernel $\kappa_h$ takes the form
  \begin{align} \label{eq:defkappah}
    \kappa_h(y)&= \frac{1}{h} \, f\left( \frac{|y|}{h} \right), \qquad \text{ with } \quad f(z) = \Lan'(z) + \Lan(z) \frac{n-1}{z},
  \end{align}
  where $\Lan$ is the Langevin function. In particular, $f$ is analytic with singularities only on the imaginary axis.
  Near zero, $f$ has the power series expansion 
  \begin{align}
    f(z) &= \sum\limits_{k=0}^{\infty} \frac{2^{2k+2}B_{2k+2}}{(2k+2)!} \; (2k+n) \; z^{2k},
  \end{align}
  with a convergence radius of $\pi$. ($B_{l}$ denotes the $l$-th Bernoulli number.)
  Because of this the kernel $\kappa_h$ is itself analytic and \eqref{eqn:nonidealConv} holds for any $\rho \in \BV_0(\Omega)$ independently of the spatial
  dimenion $n \in \{1,2,3\}$.
\end{theorem}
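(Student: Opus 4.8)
The plan is to compute the trace $\alpha_h[\rho] = \trace A_h[\rho]$ directly, exploiting the fact that for $h>0$ the integrand is a genuine smooth function rather than a distribution. As in the proof of Theorem~\ref{thm:IdealizationTheorem}, taking the matrix trace turns the $r$-gradient in \eqref{eqn:matfield} into a divergence, so that
\begin{align*}
  \alpha_h[\rho](r) = \int_{\R^n} \rho(x)\, \diver_r\!\left( \Lan\!\left( \frac{|r-x|}{h} \right) \frac{r-x}{|r-x|} \right) dx.
\end{align*}
The key computation is then the divergence of the radial vector field $y \mapsto \Lan(|y|/h)\, y/|y|$, which I would obtain from the standard identity $\diver(\phi(|y|)\, y/|y|) = \phi'(|y|) + (n-1)\phi(|y|)/|y|$ for radial fields. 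With $\phi(|y|) = \Lan(|y|/h)$ and the substitution $z=|y|/h$ this yields exactly $\kappa_h(y) = \frac1h f(|y|/h)$ with $f(z) = \Lan'(z) + (n-1)\Lan(z)/z$, establishing the representation \eqref{eqn:nonidealConv}--\eqref{eq:defkappah}.

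For the power series and analyticity, I would start from the Bernoulli-number expansion $\coth(z) = \frac1z + \sum_{k\ge1}\frac{2^{2k}B_{2k}}{(2k)!}z^{2k-1}$, so that $\Lan(z) = \coth(z)-1/z = \sum_{k\ge0}\frac{2^{2k+2}B_{2k+2}}{(2k+2)!}z^{2k+1}$. Differentiating term by term introduces the factor $(2k+1)$ into the series for $\Lan'(z)$, while dividing the $\Lan$-series by $z$ gives the series for $\Lan(z)/z$ with factor $1$; adding $(n-1)$ times the latter to the former collects the factor $(2k+1)+(n-1)=2k+n$, which is precisely the claimed coefficient. The radius of convergence is read off from the singularity structure of $\coth(z)=\cosh z/\sinh z$: its poles sit at the zeros of $\sinh$, i.e.\ at $z=ik\pi$, $k\in\mathbb{Z}\setminus\{0\}$, all on the imaginary axis, and the nearest ones $\pm i\pi$ give radius $\pi$. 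Since $\Lan$ removes the pole of $\coth$ at the origin and $\Lan(z)/z$ is regular at $0$ (because $\Lan$ is odd with $\Lan(0)=0$), $f$ is analytic on $\C$ minus the imaginary-axis poles, as asserted.

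Finally, to justify that \eqref{eqn:nonidealConv} holds for every $\rho\in\BV_0(\Omega)$ independently of $n$, I would stress the decisive difference from the ideal case: near $y=0$ one has $\Lan(|y|/h)\, y/|y| \approx y/(3h)$, so the radial field is smooth \emph{through} the origin, and hence $\kappa_h$ is a bounded, continuous (indeed analytic) radially symmetric function with $\kappa_h(0)=n/(3h)$. The convolution $\int\rho(x)\kappa_h(r-x)\,dx$ is therefore absolutely convergent for any integrable, compactly supported $\rho$, so that no integration by parts, no distributional limit, and no $C_c^\infty$-approximation is required, and the identity is valid verbatim on all of $\BV_0(\Omega)$ for $n\in\{1,2,3\}$.

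I expect the only genuinely delicate point to be the bookkeeping that reconciles the two series, namely the index shift inherited from the $\coth$ expansion together with the combination yielding the factor $2k+n$, and pinning the convergence radius to $\pi$ via the poles of $\coth$. The divergence computation and the $\BV_0$ extension are, by contrast, routine once one observes that for $h>0$ the kernel is a bona fide smooth and bounded function.
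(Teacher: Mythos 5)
Your proposal is correct and follows essentially the same route as the paper's proof: taking the trace to turn the gradient into a divergence, expanding the radial divergence to obtain $f(z) = \Lan'(z) + (n-1)\Lan(z)/z$, and reading off the power series, the factor $2k+n$, and the radius $\pi$ from the Bernoulli expansion and the imaginary-axis poles of $\coth$. Your only additions --- the explicit value $\kappa_h(0) = n/(3h)$ and the remark that boundedness of the analytic kernel makes the convolution absolutely convergent on all of $\BV_0(\Omega)$ without any approximation argument --- are correct and merely spell out what the paper leaves implicit.
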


We postpone the proof of this statement to the end of this section and record some immediate consequences. 
The first consequence is the following reconstruction formula.

\begin{theorem}\label{thm:RecoNonIdeal} (Reconstruction Formula for the Non-Idealized Case.)
Suppose that noise-free data $F(r) =  A_h[\rho](r)$ at each point $r$ is given. 
Then,
\begin{align}
   \rho   =  \kappa_h^{-1} \circ \trace A_h[\rho].
\end{align}
That means, we take the pointwise trace and then deconvolve w.r.t. $\kappa_h$ given by \eqref{eq:defkappah}.
The inverse of the convolution operator $\kappa_h$ is denoted by $\kappa_h^{-1}$.
\end{theorem}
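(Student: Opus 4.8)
The plan is to follow exactly the pattern of the proof of Theorem~\ref{thm:RecoIdeal}, the only difference being that the role of the limiting kernel $\kappa$ is now played by the non-ideal kernel $\kappa_h$ supplied by Theorem~\ref{thm:NonIdealizationTheorem}. Concretely, I would start from the given matrix-valued data $F = A_h[\rho]$ and apply the pointwise matrix-trace. By Theorem~\ref{thm:NonIdealizationTheorem} this collapses the non-standard, matrix-valued convolution into the scalar deconvolution problem
\begin{align}
  \trace F &= \alpha_h[\rho] = \kappa_h \ast \rho,
\end{align}
with $\kappa_h$ the explicit radially symmetric analytic kernel of \eqref{eq:defkappah}. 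This is the decisive structural step: taking the trace reduces the problem to an ordinary scalar convolution, so that a single deconvolution recovers $\rho$.

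The second step is to invert this convolution, writing $\rho = \kappa_h^{-1} \circ \trace F = \kappa_h^{-1} \circ \trace A_h[\rho]$. To make the operator $\kappa_h^{-1}$ precise, I would pass to the Fourier domain, where the convolution becomes multiplication by the symbol $\widehat{\kappa_h}$, so that formally $\widehat{\rho} = \widehat{\trace F} / \widehat{\kappa_h}$. The inversion is legitimate as soon as $\widehat{\kappa_h}$ has no zeros; I would verify this from the power-series representation of $f$ in Theorem~\ref{thm:NonIdealizationTheorem} (whose coefficients have a fixed sign, reflecting the relation to the positive fundamental-solution symbol in the idealization limit of Theorem~\ref{thm:IdealizationTheorem}), which forces $\widehat{\kappa_h}$ to be strictly positive and hence the forward map to be injective on $\BV_0(\Omega)$.

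The main obstacle is therefore not the algebraic identity, which is immediate once Theorem~\ref{thm:NonIdealizationTheorem} is in hand, but the precise meaning of $\kappa_h^{-1}$ and the function space on which the reconstruction is exact. Because $f$ is analytic with singularities only off the real axis, its Fourier transform --- and thus $\widehat{\kappa_h}$ --- decays faster than any polynomial (indeed exponentially). Consequently $\kappa_h^{-1}$ amplifies high frequencies catastrophically: it exists only as an unbounded operator defined on the range of the forward map, which matches the severe ill-posedness announced before the theorem. I would therefore state the reconstruction identity either for $\rho$ restricted to that range, or interpret it formally in the Fourier and distributional sense, and I expect the careful bookkeeping of this domain-of-definition issue --- rather than any computation --- to be the genuinely delicate point.
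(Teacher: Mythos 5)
Your core argument is exactly the paper's proof: apply the pointwise matrix-trace, invoke Theorem~\ref{thm:NonIdealizationTheorem} to collapse the matrix-valued problem into the scalar deconvolution $\trace F = \kappa_h \ast \rho$, and invert formally to get $\rho = \kappa_h^{-1}\circ\trace A_h[\rho]$. The paper's proof stops there, treating $\kappa_h^{-1}$ purely formally as the inverse of the convolution operator, so your first two steps reproduce it faithfully.

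Your supplementary attempt to justify the invertibility, however, contains an error you should fix before relying on it: the coefficients of the power series of $f$ do \emph{not} have a fixed sign. They are $a_{k+1}(2k+n)$ with $a_{k+1} = 2^{2k+2}B_{2k+2}/(2k+2)!$, and the Bernoulli numbers $B_{2k}$ alternate in sign ($B_2 = 1/6$, $B_4 = -1/30$, $B_6 = 1/42,\dots$), as one also sees from $\Lan(z) = z/3 - z^3/45 + 2z^5/945 - \cdots$. So strict positivity of $\widehat{\kappa_h}$ does not follow from the series; note also that pointwise positivity of the radial kernel $\kappa_h \ge 0$ is not sufficient either (radially symmetric nonnegative functions can have vanishing Fourier transform away from the origin), and in the multivariate case $\kappa_h \notin L^1$ since $\kappa_h(y)\sim (n-1)/|y|$ at infinity, so $\widehat{\kappa_h}$ must be handled distributionally. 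Injectivity of the forward map would need a genuinely different argument. That said, this gap lies entirely in material beyond what the paper proves; your closing remarks on the unboundedness of $\kappa_h^{-1}$ and the domain-of-definition issue are consistent with the severe ill-posedness established in Theorem~\ref{thm:SevIllposed}, which the paper derives from the analyticity of $\kappa_h$ rather than from Fourier decay estimates.
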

\begin{proof}
 Given the matrix-valued data $F = A_h[\rho]$, we apply the matrix-trace. 
 As a consequence of Theorem~\ref{thm:NonIdealizationTheorem} we obtain the deconvolution problem
 \begin{align}
  \trace F &= \kappa_h \ast \rho.
 \end{align}
 By inversion we obtain $\rho   =  \kappa_h^{-1} \circ  \trace F = \kappa_h^{-1} \circ \trace A_h[\rho]$.\hfill
\end{proof}

A next consequence is the following statement giving a first relationship between the convolution kernel $\kappa_h$ and the idealization limit $\kappa$.
It reveals again the dimension dependence from a different viewpoint.
\begin{corollary}\label{cor:RelaToLimit1} (Relation to the Idealization Limit I.)
The kernels $\kappa,\kappa_h$ representing the traces of the idealized and the non-idealized MPI operator are related by 
\begin{align}\label{eqn:rela}
\kappa_h(y)&= \Lan' \left( \frac{|y|}{h} \right) \; \frac{1}{h} + \Lan \left( \frac{|y|}{h} \right) \; \frac{n-1}{|y|},
\end{align}
where $\Lan$ is the Langevin function.

In the univariate case, $n=1$, the second summand of \eqref{eqn:rela} vanishes, while the first summand is a Dirac-sequence:
\begin{align}
 \kappa_h(y)&= \Lan' \left( \frac{|y|}{h} \right) \; \frac{1}{h} \quad \to \quad 2 \; \delta(y) = \kappa(y) \quad \text{ as } \quad h \to 0.
\end{align}

In the multivariate case, $n>1$, the factor $(n-1)/|y|$ of the second summand is already the idealized convolution kernel $\kappa(h)$.
The first summand is a zero-sequence, while the second summand tends to $\kappa(y) = (n-1)/|y|$.
\begin{align}
 \kappa_h(y)&= \Lan' \left( \frac{|y|}{h} \right) \; \frac{1}{h} + \Lan \left( \frac{|y|}{h} \right) \; \frac{n-1}{|y|}
 \quad \to \quad 0 + \frac{n-1}{|y|} = \kappa(y) \quad \text{ as } \quad h \to 0.
\end{align}
The limits here are understood in the sense of distributions.
\end{corollary}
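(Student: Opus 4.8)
The plan is to read off the explicit formula \eqref{eqn:rela} directly from Theorem~\ref{thm:NonIdealizationTheorem} and then establish the two stated limits separately, working throughout in the sense of distributions, i.e.\ by pairing $\kappa_h$ against an arbitrary test function $\varphi \in C_c^\infty(\R^n)$. For the formula itself, I would simply insert $f(z) = \Lan'(z) + \Lan(z)\frac{n-1}{z}$ into $\kappa_h(y) = \frac1h f(|y|/h)$; the factor $\frac1h$ multiplying $\Lan(|y|/h)\frac{n-1}{|y|/h}$ collapses to $\Lan(|y|/h)\frac{n-1}{|y|}$, which gives \eqref{eqn:rela}. I then split $\kappa_h$ into its two summands and analyze the limit $h\to0$ of each. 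Throughout I would use the elementary properties of the Langevin function: $\Lan$ is odd and increasing with $\Lan(\pm\infty)=\pm1$ and $|\Lan|\le1$, while its derivative $\Lan'$ is even, nonnegative, bounded, and satisfies $\Lan'(z)\lesssim (1+z^2)^{-1}$ (being $\approx\tfrac13$ near $0$ and $\sim z^{-2}$ at infinity).

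For the univariate case $n=1$ the second summand carries the factor $n-1=0$ and so vanishes identically. The first summand $\frac1h\Lan'(|y|/h)=\frac1h\Lan'(y/h)$ is a Dirac sequence of total mass $\int_\R \Lan'(z)\,dz = \Lan(+\infty)-\Lan(-\infty)=2$. Pairing with $\varphi$ and substituting $z=y/h$ gives $\int_\R \Lan'(z)\varphi(hz)\,dz$, and since $|\Lan'(z)\varphi(hz)|\le\|\varphi\|_\infty\,\Lan'(z)$ is integrable, dominated convergence yields $2\varphi(0)$. Hence the first summand converges to $2\delta=\kappa$, recovering the 1D part of Theorem~\ref{thm:IdealizationTheorem}.

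For the multivariate case $n>1$ the second summand is the easy one: for fixed $y\neq0$ one has $|y|/h\to\infty$, so $\Lan(|y|/h)\to1$ and $\Lan(|y|/h)\frac{n-1}{|y|}\to\frac{n-1}{|y|}=\kappa(y)$ pointwise, while $|\Lan(|y|/h)\frac{n-1}{|y|}|\le\frac{n-1}{|y|}$ is locally integrable on $\R^n$ for $n\ge2$; dominated convergence then gives distributional convergence to $\kappa$. The first summand $\frac1h\Lan'(|y|/h)$ is the delicate one, and I expect it to be the main obstacle: unlike in 1D it is \emph{not} integrable over $\R^n$ (its formal mass $\omega_{n-1}h^{n-1}\int_0^\infty\Lan'(s)s^{n-1}\,ds$ diverges, since $\Lan'(s)s^{n-1}\sim s^{n-3}$ is non-integrable at infinity for $n\ge2$), so no mass/approximate-identity argument applies. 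The point is to exploit the compact support of the test function: rescaling $y=hz$ turns the pairing into $h^{n-1}\int_{\R^n}\Lan'(|z|)\varphi(hz)\,dz$, and since $\operatorname{supp}\varphi\subset B_M$ the integrand is supported in $|z|\le M/h$. Bounding $|\Lan'|\lesssim(1+r^2)^{-1}$ and integrating radially gives $\int_0^{M/h}\frac{r^{n-1}}{1+r^2}\,dr$, which grows like $\log(1/h)$ for $n=2$ and like $1/h$ for $n=3$. In either case the prefactor $h^{n-1}$ dominates this growth, the bounds being $h\log(1/h)\to0$ and $h\to0$ respectively, so the first summand vanishes distributionally.

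Combining the two summands gives $\kappa_h\to\kappa$ in the sense of distributions in every dimension, which is the assertion. The only genuinely nontrivial step is controlling the first summand for $n>1$: the competition between the divergent (truncated) integral and the vanishing prefactor $h^{n-1}$ must be made precise, and it is this balance, rather than any cancellation or finite-mass argument, that forces the first summand to disappear in the limit.
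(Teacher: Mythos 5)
Your proof is correct, but it takes a genuinely different route from the paper. The paper proves this corollary as an immediate consequence of its two theorems: Theorem~\ref{thm:NonIdealizationTheorem} supplies the formula \eqref{eqn:rela} (just as you do), but the convergence $\kappa_h \ast \rho \to \kappa \ast \rho$ is \emph{not} re-proved; it is simply inherited from Theorem~\ref{thm:IdealizationTheorem}, whose proof moved the derivative onto $\rho$ by integration by parts and applied dominated convergence with the dominant $|\nabla_x\rho|$. The identification of the individual summands is then done by subtraction: in 1D the second summand vanishes identically, so the first must be the Dirac sequence realizing the limit $2\delta$; for $n>1$ the factor $\Lan(|y|/h)$ tends to one almost everywhere, so the second summand carries the limit $\kappa$ and the first is forced to be a zero-sequence. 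You instead give a direct, self-contained distributional argument, pairing each summand against $\varphi\in C_c^\infty(\R^n)$: the 1D mass computation $\int_\R \Lan'(z)\,dz = 2$ with dominated convergence, the local integrability of $(n-1)/|y|$ for the second summand, and --- the genuinely new ingredient relative to the paper --- the quantitative scaling estimate showing that $\frac1h\Lan'(|y|/h)$, despite having divergent formal mass for $n\ge 2$, vanishes because the prefactor $h^{n-1}$ beats the $\log(1/h)$ (for $n=2$) resp.\ $1/h$ (for $n=3$) growth of the truncated integral $\int_0^{M/h} r^{n-1}(1+r^2)^{-1}\,dr$. The paper's route is shorter and emphasizes consistency with its earlier analysis, but its convergence statement is really convergence tested against admissible densities $\rho$ (e.g.\ $\BV_0(\Omega)$), and the claim that the first summand is a zero-sequence rests on an implicit subtraction of limits; your argument establishes honest convergence against all test functions, makes the domination in the second summand explicit, and exposes precisely why no approximate-identity argument can apply to the first summand --- arguably a more rigorous and more informative proof of the same statement.
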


\begin{proof}
This is an immediate consequence of Theorems~\ref{thm:NonIdealizationTheorem} and \ref{thm:IdealizationTheorem}:
Theorem~\ref{thm:NonIdealizationTheorem} yields the trace of the MPI core operator in terms of a convolution $\alpha_h[\rho] = \kappa_h \ast \rho$ with
\begin{align}\label{eqn:kappah}
\kappa_h(r-x)&= \Lan' \left( \frac{|r-x|}{h} \right) \; \frac{1}{h} + \Lan \left( \frac{|r-x|}{h} \right) \; \frac{n-1}{|r-x|}.
\end{align}
Theorem~\ref{thm:IdealizationTheorem} gives the limit
\begin{align}
 \kappa_h \ast \rho(r) &= \alpha_h[\rho](r) \quad \to \quad \alpha[\rho](r) = \kappa \ast \rho(r) \quad \text{ as } \quad h \to 0.
\end{align}
In the univariate case, $n=1$, the second summand of \eqref{eqn:kappah} vanishes. The limiting kernel is $\kappa(r-x) = 2\; \delta(r-x)$, 
thus the first summand is a Dirac-sequence.

In the multivariate case the factor $(n-1)/|r-x|$ is the limiting convolution kernel while the factor $\Lan \left( |r-x|/h \right)$
tends to one almost everywhere, thus the first summand is a zero-sequence. \hfill
\end{proof}

A particularly nice relation between the traces $\alpha[\rho]$ and $\alpha_h[\rho]$ of the idealized and the non-idealized MPI operators is the
already announced connection in terms of convolution. It tells us that in 3D the non-idealized $\alpha_h[\rho]$ is a smoothed version of the idealized 
$\alpha[\rho]$.

\begin{theorem}\label{thm:RelaToLimit2} (Relation to the Idealization Limit II.)
In the three-dimensional case, $n=3$, we have the following relationship between the non-idealized $\alpha_h[\rho]$ and the idealized $\alpha[\rho]$:
\begin{align}\label{eqn:Rela3D}
\alpha_h[\rho] = -\frac{\laplace \kappa_h}{8\pi} * \alpha[\rho].
\end{align}	
Because of the analyticity of $\kappa_h$, $\alpha_h[\rho]$ is an infinitely smoothed version of the idealized $\alpha[\rho]$.
\end{theorem}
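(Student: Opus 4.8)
The plan is to reduce the claimed identity to a single convolution identity between the two scalar trace-kernels, namely
\begin{align}
-\frac{\laplace \kappa_h}{8\pi} \ast \kappa = \kappa_h ,
\end{align}
and then to propagate it through the density $\rho$. By Theorem~\ref{thm:NonIdealizationTheorem} and Theorem~\ref{thm:IdealizationTheorem} we already have $\alpha_h[\rho]=\kappa_h\ast\rho$ and, in the three-dimensional case, $\alpha[\rho]=\kappa\ast\rho$ with $\kappa(y)=2/|y|$. Hence, once the kernel identity above is established, I would convolve both of its sides with the compactly supported density $\rho\in\BV_0(\Omega)$ and invoke associativity of convolution to obtain
\begin{align}
-\frac{\laplace \kappa_h}{8\pi}\ast\alpha[\rho]
=-\frac{\laplace \kappa_h}{8\pi}\ast(\kappa\ast\rho)
=\Bigl(-\frac{\laplace \kappa_h}{8\pi}\ast\kappa\Bigr)\ast\rho
=\kappa_h\ast\rho=\alpha_h[\rho],
\end{align}
which is exactly \eqref{eqn:Rela3D}.

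For the kernel identity I would exploit the link to the fundamental solution recorded in \eqref{eq:kappa3D}: in 3D one has $\kappa=8\pi\,\Phi$ with $\Phi(y)=1/(4\pi|y|)$ and $-\laplace\Phi=\delta$. The heart of the argument is the classical representation formula $u=\Phi\ast(-\laplace u)$ for the Poisson equation, applied to $u=\kappa_h$. Substituting this together with $\kappa=8\pi\Phi$ gives
\begin{align}
-\frac{\laplace \kappa_h}{8\pi}\ast\kappa
=\Phi\ast(-\laplace\kappa_h)=\kappa_h,
\end{align}
as desired. To legitimise the representation formula I first note, via Theorem~\ref{thm:NonIdealizationTheorem}, that $\kappa_h(y)=\tfrac1h\,f(|y|/h)$ is analytic, so that in radial form $\laplace\kappa_h$ is a genuine smooth function carrying no distributional mass at the origin. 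A short asymptotic computation from $\Lan(z)\to1$ and $\Lan'(z)=\Order(z^{-2})$ shows that $\kappa_h(y)\to0$ like $1/|y|$ while $\laplace\kappa_h(y)=\Order(|y|^{-4})$; thus $\laplace\kappa_h\in L^1(\R^3)$ and $\kappa_h$ is the unique decaying solution of its own Poisson equation, which is precisely what the representation formula encodes.

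The main obstacle is analytic bookkeeping rather than anything conceptual: in 3D neither $\kappa$ nor $\kappa_h$ is integrable (both decay only like $1/|y|$), so the bare convolution $\kappa_h\ast\kappa$ does not converge and must not be manipulated directly. The whole point of moving the Laplacian onto $\kappa_h$ first is that $-\laplace\kappa_h/8\pi$ is an honest $L^1$ function of unit mass, $\int_{\R^3}(-\laplace\kappa_h/8\pi)\,dy=1$ by a boundary-flux computation over large spheres, so that every convolution appearing above is absolutely convergent and the single associativity step is justified by the compact support of $\rho$ together with the decay of $-\laplace\kappa_h$. Finally, the closing remark that $\alpha_h[\rho]$ is an infinitely smoothed version of $\alpha[\rho]$ is immediate: since $f$ is analytic, so is the mollifier-type kernel $-\laplace\kappa_h/8\pi$, and convolution against an analytic, integrable kernel of unit mass turns $\alpha[\rho]$ into an analytic function.
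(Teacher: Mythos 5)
Your proof is correct and is essentially the paper's own argument read in reverse order: the paper writes $\alpha_h[\rho]=(\kappa_h*\delta)*\rho$, substitutes $\delta=-\laplace\Phi$, moves the Laplacian onto $\kappa_h$, and identifies $\Phi=\kappa/(8\pi)$ --- which is precisely your kernel identity $-\frac{\laplace\kappa_h}{8\pi}*\kappa=\kappa_h$, derived by you from the representation formula $\kappa_h=\Phi*(-\laplace\kappa_h)$. The only substantive difference is that the paper performs these manipulations purely formally at the distributional level, whereas you supply the justification: the decay estimates ($\kappa_h(y)\sim 2/|y|$, $\laplace\kappa_h=\Order(|y|^{-4})\in L^1(\R^3)$, total flux $8\pi$), the Liouville-type uniqueness of the decaying solution of the Poisson equation, and the absolute convergence needed for the associativity step --- bookkeeping that is genuinely required here, since neither $\kappa$ nor $\kappa_h$ is integrable, so this is a worthwhile strengthening rather than a detour.
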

\begin{proof}
By Theorem~\ref{thm:NonIdealizationTheorem} we can express $\alpha_h[\rho]$ by convolution. Here, we supplement a Dirac-$\delta$ and get
\begin{align}
 \alpha_h[\rho] &= \kappa_h * \rho = (\kappa_h * \delta) * \rho.
\end{align}
In 3D, we can replace the Dirac-$\delta$ with the negative Laplacian of the fundamental solution. In addition we move the Laplace operator over to $\kappa_h$ to obtain
\begin{align}
 \alpha_h[\rho] &= (\kappa_h * (-\laplace \Phi)) * \rho = (-\laplace \kappa_h *  \Phi) * \rho.
\end{align}
By Theorem~\ref{thm:IdealizationTheorem}, we have $\kappa = 8\pi \; \Phi$. Now we express $\Phi$ in terms of the idealized kernel $\kappa$ which results in
\begin{align}
 \alpha_h[\rho] &= \left(-\laplace \kappa_h *  \frac{1}{8\pi} \kappa \right) * \rho = -\frac{\laplace \kappa_h}{8\pi} * ( \kappa * \rho).
\end{align}
Finally, we use the result of Theorem~\ref{thm:IdealizationTheorem} stating that $\kappa \ast \rho = \alpha[\rho]$, which yields equation \eqref{eqn:Rela3D}. \hfill
\end{proof}

Furthermore we obtain the following statement on severe ill-posedness in the sense of inverse problems
\cite{Engl_Regularization_of_Inverse_Problems,kirsch2011introduction}. 
\begin{theorem} (Severe Ill-Posedness.)\label{thm:SevIllposed}
  The non-idealized MPI problem of Theorem~\ref{thm:NonIdealizationTheorem} is severely ill-posed in 
  the following sense: there are no two spaces $H^s,H^t$ in the Sobolev scale, such that the  
  trace $\alpha_h$ of the MPI operator induces an isomorphism $\alpha_h:H^s \to H^t$ between these two spaces.
\end{theorem}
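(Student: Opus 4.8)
The plan is to read off the mapping properties of $\alpha_h$ from its Fourier symbol and to exploit the analyticity of the kernel $\kappa_h$ established in Theorem~\ref{thm:NonIdealizationTheorem}. By that theorem, $\alpha_h$ acts on (compactly supported) densities as the scalar convolution $\alpha_h[\rho] = \kappa_h \ast \rho$, so it is a Fourier multiplier with the radial symbol $m_h(\xi) := \widehat{\kappa_h}(\xi)$. The standard spectral characterization of Sobolev mapping properties of such multipliers says that $\alpha_h$ induces an isomorphism $H^s \to H^t$ only if there are constants $0 < c \le C$ with
\begin{align}\label{eqn:planbound}
 c \, (1+|\xi|^2)^{(s-t)/2} \le |m_h(\xi)| \le C \, (1+|\xi|^2)^{(s-t)/2} \qquad \text{for all } \xi.
\end{align}
Thus it suffices to show that $|m_h(\xi)|$ decays faster than any power of $|\xi|$ as $|\xi| \to \infty$: then the lower bound in \eqref{eqn:planbound} must fail for every choice of $s$ and $t$, which rules out an isomorphism between any pair of Sobolev spaces.

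First I would establish this super-polynomial decay from analyticity. By Theorem~\ref{thm:NonIdealizationTheorem} the profile $f(z) = \Lan'(z) + \Lan(z)(n-1)/z$ is analytic with its only singularities on the imaginary axis and power-series radius $\pi$; hence $f$ extends holomorphically to the strip $\{|\operatorname{Im} z| < \pi\}$. Since $f$ is even, $f(z) = \tilde f(z^2)$ for a function $\tilde f$ holomorphic on the disc $\{|w| < \pi^2\}$, so $\kappa_h(y) = \tfrac{1}{h}\,\tilde f(|y|^2/h^2)$ is real-analytic on all of $\R^n$ and extends holomorphically to a complex neighbourhood of width of order $h\pi$. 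A Paley--Wiener type contour-shift argument then yields the exponential bound $|m_h(\xi)| \le C_a\, e^{-a|\xi|}$ for every $a < h\pi$. Here I would note that the slow $\kappa_h(y) \sim (n-1)/|y|$ decay at spatial infinity is harmless: it only produces a $|\xi|^{-(n-1)}$ singularity of $m_h$ near $\xi = 0$, whereas the high-frequency behaviour that matters is governed solely by the smoothness of $\kappa_h$.

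Finally, fixing arbitrary $s,t$, the bound just obtained gives $|m_h(\xi)|\,(1+|\xi|^2)^{(t-s)/2} \le C_a\, e^{-a|\xi|}(1+|\xi|^2)^{(t-s)/2} \to 0$ as $|\xi| \to \infty$, so no positive lower bound as in \eqref{eqn:planbound} can hold. Consequently $\alpha_h$ cannot be bounded below from $H^s$ into $H^t$, hence cannot possess a bounded inverse, and is therefore not an isomorphism $H^s \to H^t$; since $s$ and $t$ were arbitrary this proves the claim. Equivalently, one can make the failure of the lower bound concrete by testing against a sequence of modulated bump functions $\rho_j$ supported in $\Omega$ with frequencies $\xi_j \to \infty$, for which $\|\rho_j\|_{H^s} = 1$ while $\|\alpha_h[\rho_j]\|_{H^t} \to 0$.

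The main obstacle I expect is the second step: turning the one-dimensional strip-analyticity of the profile $f$ into a genuine exponential decay estimate for the $n$-dimensional \emph{radial} transform $m_h$. This requires a Paley--Wiener estimate adapted to radial functions on a complex tube (equivalently, control of the Hankel transform of an analytic profile) together with care in separating the harmless low-frequency contribution of the $1/|y|$ tail from the high-frequency decay that drives the ill-posedness.
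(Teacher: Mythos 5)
Your proposal is correct in outline, but it takes a genuinely different route from the paper. The paper's own proof is a two-line soft argument: by Theorem~\ref{thm:RelaToLimit2} (ultimately by the analyticity of $\kappa_h$ from Theorem~\ref{thm:NonIdealizationTheorem}), every output $\alpha_h[\rho]$ is an analytic function, so the image of $\alpha_h$ cannot be closed in $H^t$ --- in particular it cannot be all of $H^t$, which contains non-analytic elements --- whereas an isomorphism would force exactly that. No Fourier analysis appears at all. You instead argue quantitatively on the symbol side: $\alpha_h$ is the Fourier multiplier with symbol $m_h = \widehat{\kappa_h}$, an isomorphism $H^s \to H^t$ would force the two-sided bound $c\,(1+|\xi|^2)^{(s-t)/2} \le |m_h(\xi)| \le C\,(1+|\xi|^2)^{(s-t)/2}$, and the tube-analyticity of $\kappa_h$ (width of order $h\pi$, as you correctly extract from the location of the singularities of $\coth$) kills the lower bound for every pair $(s,t)$. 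This is sound, and you correctly identify both the one genuine technicality --- $\kappa_h \sim (n-1)/|y|$ is not integrable, so the contour-shift/Paley--Wiener step needs the tail separated off, producing only a $|\xi|^{-(n-1)}$ singularity of $m_h$ at the origin --- and its harmlessness for the high-frequency failure. Two remarks: first, you do not actually need exponential decay; super-polynomial decay of $m_h$ already suffices and follows more cheaply by repeated integration by parts, since $(-\Delta)^N \kappa_h$ decays like $|y|^{-1-2N}$ and is integrable for $2N+1>n$, which avoids the radial Paley--Wiener machinery you flag as the main obstacle. Second, your closing modulated-bump argument is what makes the claim robust when the Sobolev spaces are taken over the bounded field of view rather than $\R^n$, where the multiplier characterization does not literally apply; it is worth making that the primary formulation. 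What each approach buys: the paper's argument is shorter and fully rigorous with no symbol estimates, but purely qualitative; yours quantifies the severity (exponential symbol decay, hence exponentially decaying singular values and explicit counterexample sequences $\rho_j$ with $\|\rho_j\|_{H^s}=1$, $\|\alpha_h[\rho_j]\|_{H^t}\to 0$), and in fact proves the stronger statement that $\alpha_h$ is not even bounded below between any Sobolev pair, not merely that it fails to be a bijection.
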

\begin{proof}
  This is an immediate consequence of the analyticity of the kernel $\kappa_h$ and the fact that an isomorphism has a closed image.
  From Theorem~\ref{thm:RelaToLimit2} we learn that $\alpha_h[\rho]$ is analytic for any $\rho$; hence the image of $\alpha_h$ is necessarily not closed in $H^t$.\hfill
\end{proof}

Finally, we supply the proof of Theorem~\ref{thm:NonIdealizationTheorem}.
\begin{proof}
By applying the matrix-trace to $A_h[\rho]$ we obtain
\begin{align}
 \alpha_h[\rho](r) &= \trace A_h[\rho](r) = \int\limits_{\R^n} \rho(x) \; \diver_r \left( \Lan \left( \frac{|r-x|}{h} \right) \frac{r-x}{|r-x|} \right) \; dx.
\end{align}
The convolution kernel in the non-idealized case is thus given by
\begin{align}
 \kappa_h(r-x)&:= \diver_r \left( \Lan \left( \frac{|r-x|}{h} \right) \frac{r-x}{|r-x|} \right).
\end{align}
By employing the product rule for the divergence, the kernel can be expanded to
\begin{align} 
 \kappa_h(r-x) &= \nabla_r \left( \Lan \left( \frac{|r-x|}{h} \right) \right) \cdot \frac{r-x}{|r-x|} 
 +  \Lan \left( \frac{|r-x|}{h} \right) \diver_r \left( \frac{r-x}{|r-x|} \right) \\
 & = \Lan' \left( \frac{|r-x|}{h} \right) \; \frac{1}{h} + \Lan \left( \frac{|r-x|}{h} \right) \; \frac{n-1}{|r-x|}.  
\end{align}
We express the kernel via the scalar function $f$ as
\begin{align}
 \kappa_h(r-x)&= \frac{1}{h} \; f\left( \frac{|r-x|}{h} \right) & f(z) &= \Lan'(z) + \Lan(z) \frac{n-1}{z}.
\end{align}
From the Laurent series of the $\coth$ we learn that the Langevin function $\Lan$ has the power series 
\begin{align}
 \Lan(z) &= \coth(z) - \frac{1}{z} = \sum\limits_{k=1}^{\infty} a_k \; z^{2k-1}, & a_k &= \frac{2^{2k}B_{2k}}{(2k)!}, 
\end{align}
where $B_{2k}$ are the Bernoulli numbers. It is analytic at zero with radius of convergence $\pi$. 
Consequently, the function $f$ is also analytic near zero; it can be written as 
\begin{align}
 f(z) &= \sum\limits_{k=0}^{\infty} a_{k+1} \; (2k+n) \; z^{2k}.
\end{align}
All further singularities of $\coth(z)$ (besides the one at $0$ which was involved above) are  nonzero and lie on the imaginary axis. This implies that the singularities of $f$ are
nonzero and lie on the imaginary axis as well. 
Using $y = r-x$ as short hand we see that the convolution kernel $\kappa_h(y)$ is analytic near $y=0$ by
\begin{align}
 \kappa_h(y) &= \frac{1}{h} \sum\limits_{k=0}^{\infty} a_{k+1} \; (2k+n) \; \left( \frac{|y|^2}{h} \right)^{k}.
\end{align}
And away from $y=0$ it is concatenation of analytic functions.
\hfill 
\end{proof}

\section{A Model-Based Reconstruction Algorithm for Multivariate MPI}
\label{sec:ReconstructionAlgo}

In this section, we apply the derived reconstruction formulae for the MPI core operator to establish a reconstruction algorithm for MPI in the multivariate (2D and 3D) cases.    
We start with the discretization in Section~\ref{subsec:disc}. 
In particular, we recast the discrete time data into spatial 
data and obtain a reconstruction problem for the discretized MPI core operator.
Then we present our reconstruction algorithm in Section~\ref{subsec:recAlgoDesc}.
Finally, we illustrate our algorithm with a numerical example in Section~\ref{subsec:numExperiment}.

\subsection{Discretization and Reduction to a Reconstruction Problem for the Discretized MPI Core Operator} \label{subsec:disc}

As described in Section~\ref{sec:MathematicalModel}, the measured data is the time-dependent data $\vf{s}(t)$ which is associated with a scan trajectory $r(t)$.
The scan trajectory inherits the shape of the electrical current $\vf{I}(t)$ given in \eqref{eqn:currentnondim} by 
\begin{align}
r(t) &= \vf{P} \; \vf{I}(t).
\end{align}
(Here we refer to dimensionless quantities, but omit hats.)
One advantage of the present approach is that it is independent of the particular trajectory type employed. Furthermore, in our approach, even different trajectories might be combined.
The important point is that the data is given as living on a discrete sampling of the trajectory $r(t),$
i.e., we observe values $\vf{s}(t_k)$ at location $r(t_k),$ with the trajectory having tangent $v(t_k)$
at time $t_k,$ for finitely many measurements indexed by $k$.  

These data are a discrete sampling of the MPI core operator 
$\MPI(r_k,v_k) $ $= A_h[\rho](r_k)v_k.$
Our first task is to employ these data to reconstruct a discrete version of the trace $\alpha_h[\rho].$
To reconstruct these trace data in a discrete setup, 
we consider discrete functions defined on a grid of $N_1 \times N_2$ cells.
(We explain the 2D setting in the following; the 3D setting is obtained by straight forward modifications.)
On each cell a function is constant and each cell is represented by its cell center point.
We assume that the cells (corresponding to the chosen spatial resolution) are chosen in a way such that each cell of the grid is being passed at least twice from different directions with the corresonding 
trajectory points $r(t_k)$ hitting the cell.
In the following, we say that a time sample $t_k$ belongs to cell $i,$ if $r(t_k)$ is in this cell.
For each cell $i$ we collect the signal data $\vf{s}(t_{k_i})$ from those times samples $t_{k_i},$
such that $t_{k_i}$ belongs to the cell $I$ (i.e., $r(t_{k_i})$ hits the cell $i$), and gather them in a matrix $S_i$.
Accordingly, we collect all the velocity vectors $\dot{r}(t_{k_i})=v(t_{k_i})$ and gather them in a matrix $V_i$. 
Associating all points that fall in the same cell with the cell center point $x_i$,
and then using the signal encoding model, we have for each cell $i,$ the following matrix fitting problem w.r.t.\ $A_i,$
\begin{align}\label{eqn:LS}
	A_i \; V_i &= A_h[\rho](x_i) \; V_i = S_i.
\end{align}
By our assumptions above it is ensured that $V_i$ has full rank. 
(For sure, it is possible to relax the above assumption considerably by involving spatial regularity assumptions; see our discussion on future work.)
We solve the system \eqref{eqn:LS} by least squares fitting : we let $V_i^T = Q_i \; R_i$ be the reduced $QR$-decomposition \cite{golub2012matrix} with a full rank $2\times 2$ matrix $R_i$.
We obtain $A_i$ and the corresponding trace data $u_i$ on cell $i$ (corresponding to $\alpha_h[\rho](x_i)$) by
\begin{align}\label{eq:preprocessing2}
	A_i = S_i \; Q_i \; R_i^{-T},\qquad \text{and} \qquad  u_i &= \trace A_i.
\end{align}

\subsection{Reconstruction Algorithm} \label{subsec:recAlgoDesc}

Our reconstruction scheme may be subdivided into two major steps: 
in the first step we produce spatial data from the given time data on the trajectory by \eqref{eq:preprocessing2}.
This is an intermediate step which yields the trace data
and represents the particle distribution $\rho$ convolved with the kernel $\kappa_h$ according to \eqref{eq:defkappah}.
In the second step we perform deconvolution, i.e. inversion of $\kappa_h$,       
which we do in a regularized way since we learned in Theorem~\ref{thm:SevIllposed} that this problem 
is ill-posed. In this second step, we employ the analytic results of Section~\ref{sec:AnaysisMPIop}.

\paragraph{Step 1: Deriving Trace Data on a Spacial Grid from the Raw Input}
We obtain a grid function representing trace data $u_i$ in each pixel (grid cell) as explained in Section~\ref{subsec:disc}.

\paragraph{Step 2: Reconstruction of the Signal from the Derived Trace Data by Deconvolution}
Having obtained data $u_i$ related to the trace $\alpha_h[\rho](x_i)$ from Step 1,
we now take the convolution description of $\alpha_h[\rho](r)$ derived in 
Section~\ref{sec:AnaysisMPIop} into account: if the data were noiseless, we would
have to solve the problem of finding $\rho$ in $\kappa_h * \rho = u$ given $u.$
However, normally the data $\vf{s},$ and therefore also $u,$ are noise-contaminated, 
and we have learned in Section~\ref{sec:AnaysisMPIop}
that this reconstruction problem is severely ill-posed. Therefore, regularization is needed; cf. 
\cite{Engl_Regularization_of_Inverse_Problems,louis1989inverse,bertero1998introduction,kirsch2011introduction}.
As regularization technique, we here use a variational approach based on classical 
Tychonov regularization. More precisely, we consider the following (spatially continuous) problem:
\begin{align}
\rho &= \arg \min\limits_{\hat{\rho}} \; \mu \; \|\nabla_r \hat{\rho}\|_{L^2}^2 + \| \kappa_h * \hat{\rho} - u \|_{L^2}^2.
\end{align}

The convolution with $\kappa_h$ is discretized on the considered $N_1 \times N_2$ grid using the summed-up midpoint rule. We denote the resulting matrix by $K_h$.
Moreover, we discretize the gradient with forward finite differences to obtain a corresponding matrix $D$.
The discrete Tychonov-regularized problem is now of the form 
\begin{align} \label{eq:DeconvTych}
 \rho &= \arg \min\limits_{\hat{\rho}} \; \mu \; \|D \hat{\rho}\|_{2}^2 + \| K_h \; \hat{\rho} - u \|_{2}^2.
\end{align}
with vectorized $\rho,u \in \R^{N_1 \cdot N_2}$.
We solve the corresponding discrete Euler-Lagrange equation
\begin{align}\label{eqn:EulerLagrange}
 -\mu \; L \rho + K_h \left(K_h\rho - u\right) &= 0.
\end{align}
Here, $-L = D^T D$ is the five point stencil discretization of the Laplacian with zero Dirichlet boundary and $K_h=K_h^T$ is symmetric.  
The whole reconstruction algorithm is subsumed as Algorithm~\ref{alg:recAlgo}.

\begin{algorithm}[t]
\SetKwFunction{CompLambda}{CompLambda}
\SetKwFunction{CompData}{GeoLengthData}
\SetKwFunction{CompReg}{GeoLengthData}
\SetKwFunction{CompGradient}{CompGradient}
\SetKwFunction{CompGradientT}{CompGradientT}
\SetKwFunction{QR}{QR}
\SetKwFunction{searchLine}{searchLine}
\SetKwFunction{CompGradientTprime}{CompGradientTprime}
\SetKwInOut{Input}{input}
\SetKwInOut{Output}{output}
\BlankLine
\Input{Time dependent samples $s_k=s(t_k)$ along trajectory $r_k=r(t_k)$ with tangent 
	$v_k= \dot{r}(t_k)$  at times $t_k;$ regularization parameter $\mu$.  }
\Output{Reconstructed particle density $\rho$ (solution of \eqref{eq:DeconvTych} with preprocessing given by \eqref{eq:preprocessing2},\eqref{eqn:LS}).}
\BlankLine
Initialize $\rho$\;
\BlankLine

\For{$k\leftarrow 1$ \KwTo $K$}
{
	
	\BlankLine
	
	\texttt{// Collect data:}\\
	\texttt{// associate time samples with pixel grid; cf.~Section~\ref{subsec:disc}.}
	
	\BlankLine	
	
	\If{$r_k$ \text{in cell} $i$}
	{
	  $
	  \begin{aligned}
	   V(i)   &\leftarrow [V(i), \  v_k]; && \texttt{// Append tangent direction.} \\
	   S(i)   &\leftarrow [S(i), \  s_k]; && \texttt{// Append data value.}
	  \end{aligned}
	  $
	}
}

\BlankLine

\For{$i\leftarrow 1$ \KwTo $I$}
{
	\BlankLine
	\texttt{// For each cell fit trace data using~\eqref{eqn:LS} and \eqref{eq:preprocessing2}.}
	
	\BlankLine
	
	$ [Q_i,R_i] \leftarrow \QR(V^T_i)  $; \quad 
	
	$ A_i \leftarrow S_i \ Q_i \ R_i^T$; \quad 
	
	$u_i \leftarrow \trace A_i$;
		
}

\BlankLine
\texttt{// Regularized deconvolution of the trace data using \eqref{eq:DeconvTych} by}\\
\texttt{// solving \eqref{eqn:EulerLagrange} with conjugate gradients (CG).}\\
$ \rho = CG(-\mu \; L + K_h^2, \; K_h u)$; 
\BlankLine

\BlankLine
\caption{Proposed model-based reconstruction algorithm for the MPI problem; cf. \eqref{eq:MPIproblemInTime}.}
\label{alg:recAlgo}
\end{algorithm}
\DecMargin{1em}

\subsection{Numerical Results} \label{subsec:numExperiment}

We demonstrate the potential of our method by
applying the developed method to synthetic data. 
Detailed numerical investigations are the topic of a forthcoming paper.

\paragraph{Generation of Synthetic Data} 
We generate synthetic data 
using Lissajous curves \cite{lawrence2013catalog}. More precisely, we simulate the current signal $\vf{I}(t)$ as 
the Lissajous curve $l(t)$ which is defined by
\begin{align}
l(t) &= (\sin(2\pi \ m_1 t), \sin(2\pi \ m_2 t)).
\end{align}
Here $m_1,m_2$ are integer frequencies which implies that the trajectory is closed. 
Then, we assume for simplicity, that the sensitivity pattern of the drive coils $\vf{P}$ is the identity matrix, i.e.
the trajectory of our non-dimensional field-free point $r(t)$ is thus the Lissajous curve.
We note that the described framework does not depend on this particular choice of trajectories.

\paragraph{Experimental Setup}
For our experiment we used a square $N \times N = 100 \times 100$ grid,
as the underlying spatial signal --the ground truth-- we use Figure~\ref{fig:numericalRec} (d).

To simulate a scan we use the above Lissajous curves with frequencies $m_1=101$ and $m_2=102$.
We pick $K=20 \cdot N^2 = 200000$ equidistant time samples $t_k$ from the time interval $[0,1]$
to obtain spatial points $r_k=r(t_k)$ with tangents $v_k=v(t_k)$ on the Lissajous curve. 

Finally, we simulate the MPI time signal: we start with generating a signal according to the model 
\begin{align}
 s_k &= A_h[\rho](r_k) \; v_k,
\end{align}
where we evaluate $A_h[\rho](r_k)$ discretely by approximating the matrix-valued convolution via the summed-up midpoint rule on our $100 \times 100$ grid.
The resolution parameter $h$ of the convolution kernel is set to $h=10^{-2}$ and is in the range which was identified in Section~\ref{sec:MathematicalModel}.
Next, we add noise to imitate the noise-contamination of signal data from a MPI scanner
\begin{align}
 \hat{s}_k &= s_k + \varepsilon N_k.
\end{align}
The $N_k$ are i.i.d normally distributed with zero mean and a standard deviation of one.
The noise amplitude $\varepsilon$ is set to 10 percent of the signal strength:
\begin{align}
 \varepsilon &= 0.1 \; \max\limits_{k=1:K}\{|s_k|\}.
\end{align}
The perturbed signal $\hat{s}_k$ is now the input time data for our reconstruction algorithm. 

With this experimental setup we obtain the trace data $u_i$ from the signal $\hat{s}_k$.
The last step of the algorithm is regularized deconvolution. The convolution kernel $\kappa_h$ here has the same value for $h=10^{-2}$ as used in the setup.
The Tychonov regularization-parameter $\mu$ was set to $\mu = 3 \cdot 10^{-4}$.
Moreover, we used the CG-method from the Octave library with a relative tolerance $\tau$ of $\tau=2 \; 10^{-3}$
to solve \eqref{eqn:EulerLagrange} for $\rho$.
The last two parameters $\mu$ and $\tau$ have to be adjusted in general. 
With our choice we obtained good results for our test case and
the CG-method converges within 29 iterations.

Finally, the experiments were conducted on a laptop with a Intel Core I5-3337U CPU, 1.8 GHz, and 8GB of RAM.
The programs were implemented in Octave and run on Octave 3.8.1 under Ubuntu 14.04.

\paragraph{Discussion of the Results}
The result of our method is shown Figure~\ref{fig:numericalRec}.
The generated noisy time signal $\hat{s}$ is displayed Figure~\ref{fig:numericalRec}(a).
By \eqref{eq:preprocessing2} we obtain the spatial data $u_i = \alpha_h[\rho](x_i)$ out of the time signal; $u$ is shown in Figure~\ref{fig:numericalRec}(b).
The spatial data $u$ is the right-hand side for the deconvolution problem. 
According to the theory developed in Section~\ref{sec:AnaysisMPIop} the data $u$ is a smoothed version of the particle density $\rho$
in the noiseless case. Here the data $u$ inherits its noise from the noise in the time signal $\hat{s}$.
By solving the Euler-Lagrange equation \eqref{eqn:EulerLagrange} with a CG-iteration we 
obtain the reconstruction of $\rho$ displayed in Figure~\ref{fig:numericalRec}(c).
Finally, Figure~\ref{fig:numericalRec}(d) shows the ground truth, the true particle density $\rho$, of our experiment.

Our result is a very good approximation of the ground truth
and demonstrates the potential of our approach.
Besides smoothing effects due to classical Tychonov regularization (which is a well-known effect and not particular to MPI)
the original density function $\rho$ is well-reconstructed despite the relatively high noise level.

\begin{figure}[t]
	
	\def\figwidth{0.49\columnwidth}
	\def\hs{\hspace{0.0\columnwidth}}
	
	\hs
	\centering
	\begin{subfigure}[t]{\figwidth}
		\centering
		\includegraphics[width=0.9\columnwidth]{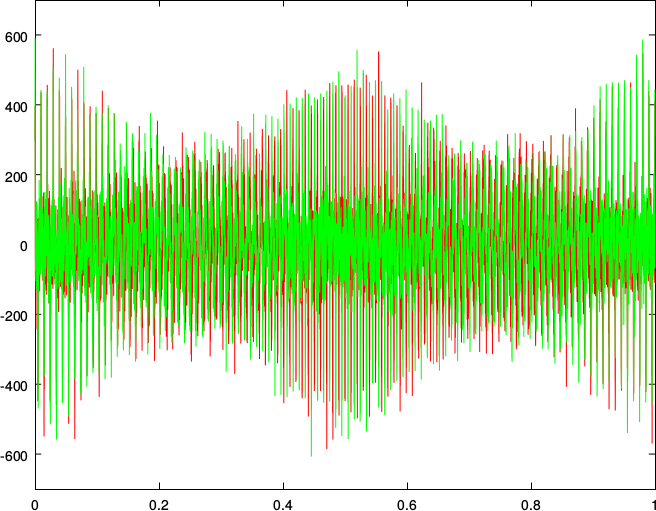}
		\caption{Noisy time signal -- cut-out.}
	\end{subfigure}
	\hs
	\begin{subfigure}[t]{\figwidth}
		\centering
		\includegraphics[width=1.0\columnwidth]{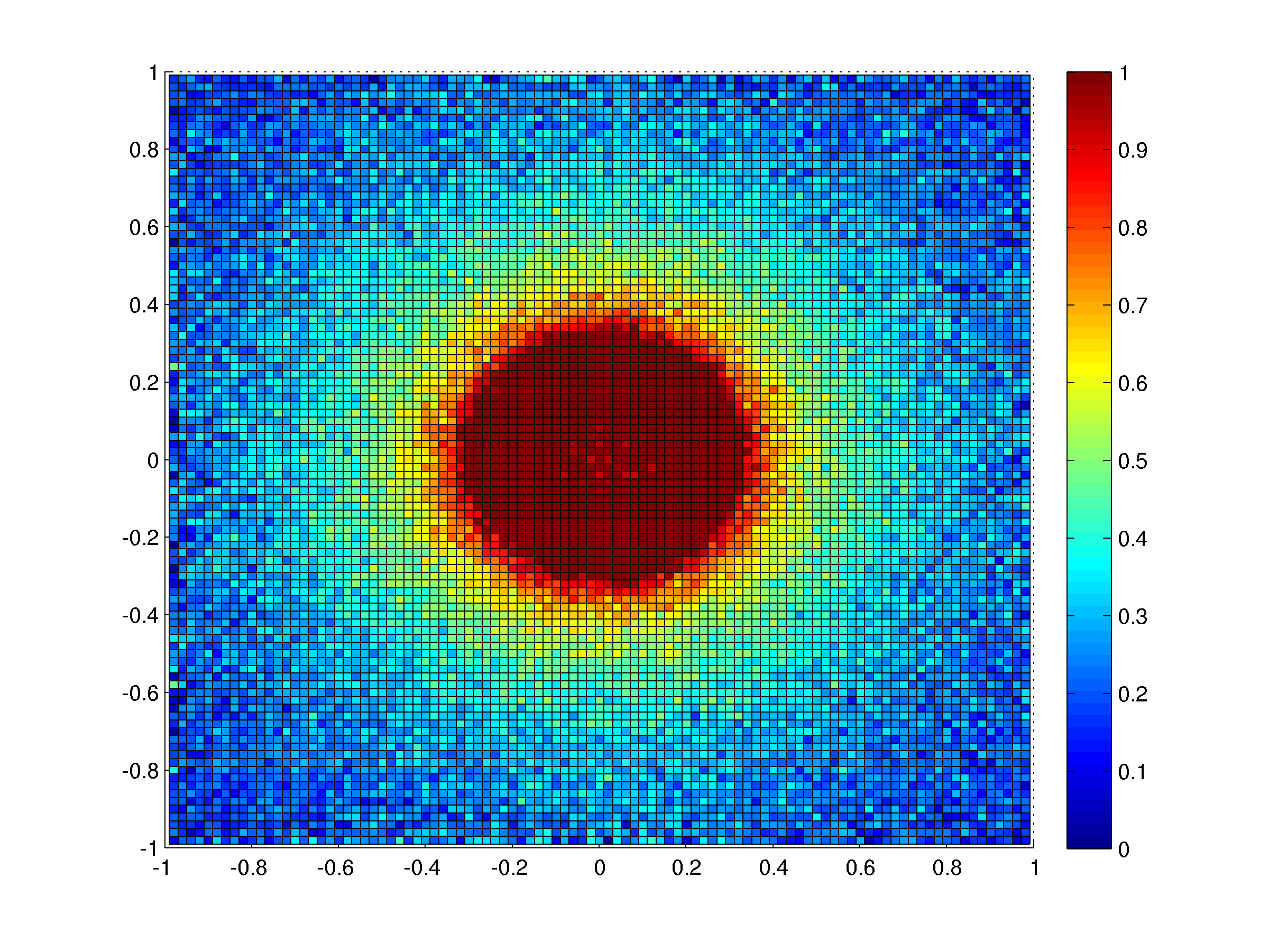}
		\caption{Intermediate step: trace after spatial fitting.}
	\end{subfigure}
	\\[2ex]
	\hs
	\begin{subfigure}[t]{\figwidth}
		\centering
		\includegraphics[width=1.0\columnwidth]{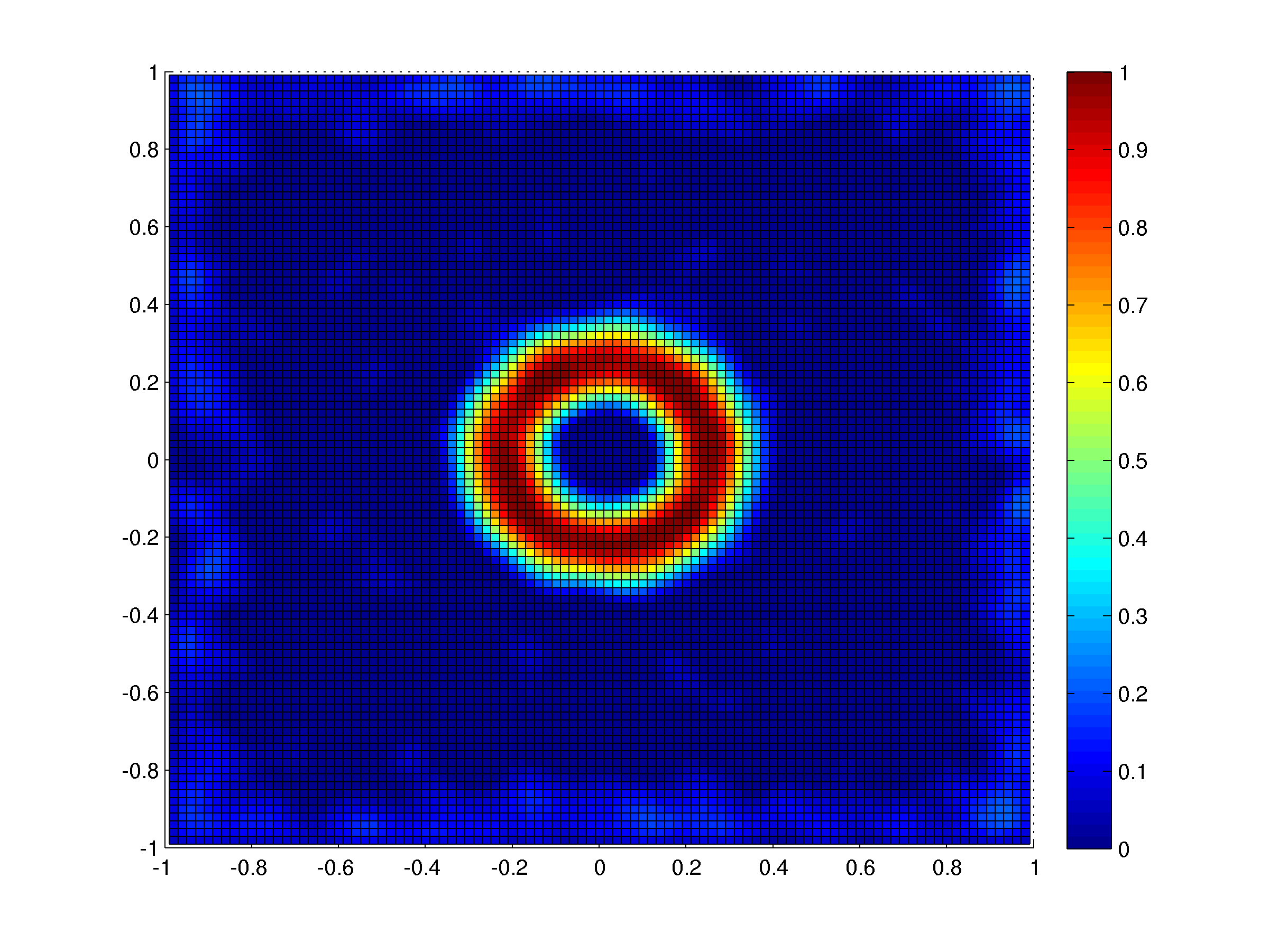}
		\caption{Reconstruction using our method.}
	\end{subfigure}
	\hs
	\begin{subfigure}[t]{\figwidth}
		\centering
		\includegraphics[width=1.0\columnwidth]{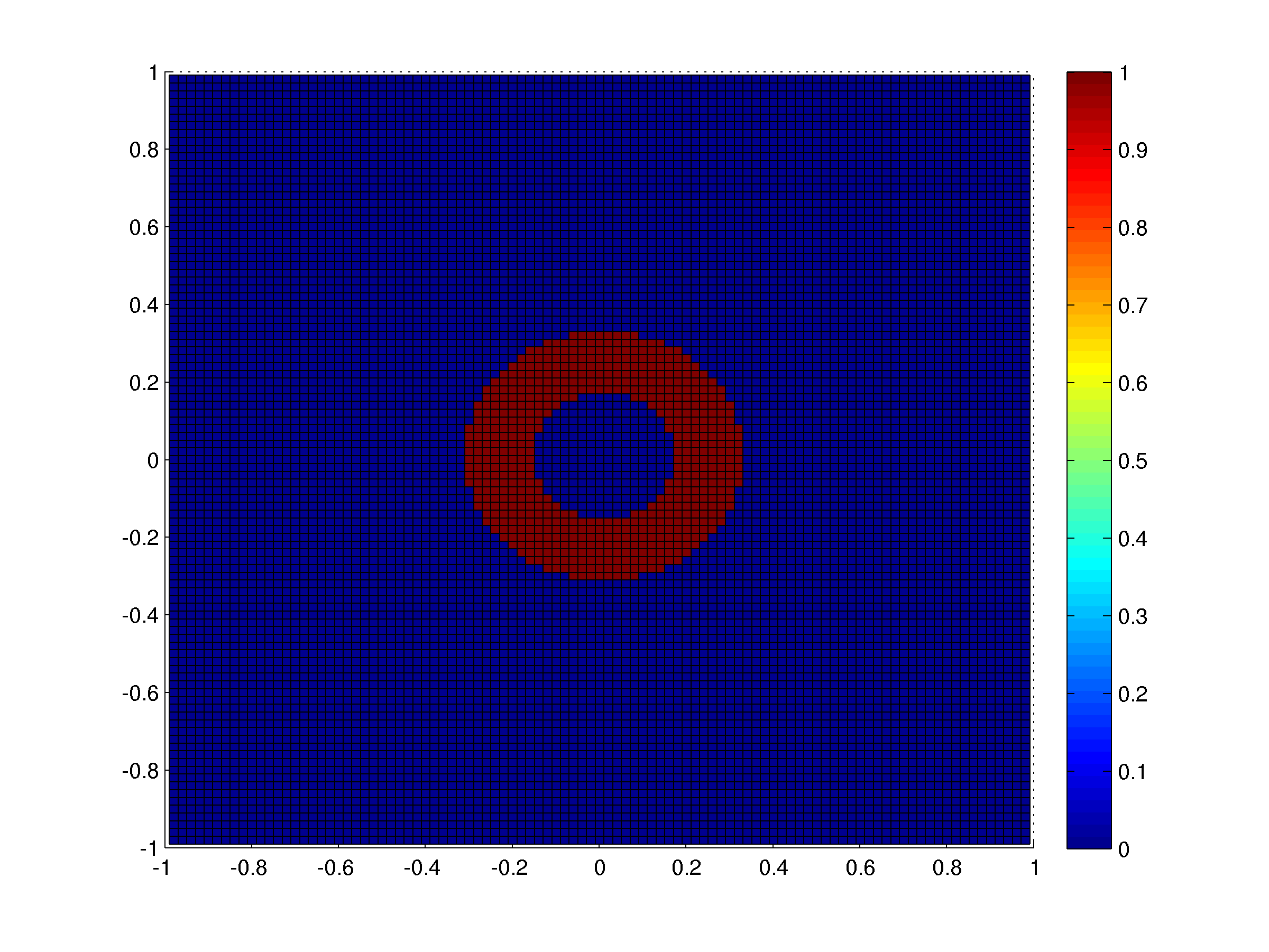}
		\caption{Original.}
	\end{subfigure}

	\caption{{\em Model-based reconstruction for multivariate MPI.} 
		Algorithm~\ref{alg:recAlgo}, proposed in this paper, is used to reconstruct a spatial particle density function $\rho$
		from a noisy MPI time signal.
		(a) shows the nosiy time signal $\hat{s}_k \in \R^2$ (first component in red, second component in green).
		In (b) we see an intermediate step obtained from \eqref{eq:preprocessing2}: 
		this is the data $u$ for our deconvolution problem.
		In a noiseless scenario the data $u$ would be a smoothed version of the true particle density function $\rho$.
		Image (c) shows the reconstructed $\rho$  obtained from deconvolution with the data $u$.
		The underlying ground truth, the true particle density function $\rho$, is displayed in (d). 
		We observe that, besides smoothing effects due to classical Tychonov regularization (which is a well-known effect), 
		the ground truth (d) is well-reconstructed by (c) despite the relatively high noise level.}\label{fig:numericalRec}
\end{figure}

\section*{Acknowledgment}
Thomas M\"arz and  Andreas Weinmann acknowledge funding by the DFG Young Researchers Network ``Mathematics for Magnetic Particle Imaging''.
Andreas Weinmann further acknowledges support by the Helmholtz Association within the young investigator group \mbox{VH-NG-526}.

\bibliographystyle{abbrv}
\bibliography{MPI_arx}{}

\begin{thebibliography}{10}

\bibitem{ambrosio2000functions}
L.~Ambrosio, N.~Fusco, and D.~Pallara.
\newblock {\em Functions of Bounded Variation and Free Discontinuity Problems},
  volume 254.
\newblock Clarendon Press, Oxford, 2000.

\bibitem{bertero1998introduction}
M.~Bertero and P.~Boccacci.
\newblock {\em Introduction to Inverse Problems in Imaging}.
\newblock CRC press, Boca Raton, 1998.

\bibitem{chikazumi1978physics}
S.~Chikazumi and S.~Charap.
\newblock {\em Physics of Magnetism}.
\newblock Krieger Publishing, New York, 1978.

\bibitem{Engl_Regularization_of_Inverse_Problems}
H.~Engl, M.~Hanke, and A.~Neubauer.
\newblock {\em Regularization of {I}nverse {P}roblems}.
\newblock Kluwer Academic Publishers, Dordrecht, 1996.

\bibitem{ferguson2009optimization}
R.~Ferguson, K.~Minard, and K.~Krishnan.
\newblock Optimization of nanoparticle core size for magnetic particle imaging.
\newblock {\em Journal of magnetism and magnetic materials},
  321(10):1548--1551, 2009.

\bibitem{GleichWeizenecker2005}
B.~Gleich and J.~Weizenecker.
\newblock {Tomographic imaging using the nonlinear response of magnetic
  particles}.
\newblock {\em Nature}, 435:1214--1217, 2005.

\bibitem{GleichWeizeneckerBorgert2008}
B.~Gleich, J.~Weizenecker, and J.~Borgert.
\newblock {Experimental results on fast 2D-encoded magnetic particle imaging}.
\newblock {\em Physics in Medicine and Biology}, 53:N81--N84, 2008.

\bibitem{golub2012matrix}
G.~Golub and C.~Van~Loan.
\newblock {\em Matrix Computations}.
\newblock JHU Press, 2012.

\bibitem{GoodwillConolly2010}
P.~Goodwill and S.~Conolly.
\newblock The {X}-space formulation of the magnetic particle imaging process:
  1-{D} signal, resolution, bandwidth, {SNR}, {SAR}, and magnetostimulation.
\newblock {\em IEEE Transactions on Medical Imaging}, 29:1851--1859, 2010.

\bibitem{GoodwillConolly2011}
P.~Goodwill and S.~Conolly.
\newblock Multidimensional {X}-space magnetic particle imaging.
\newblock {\em IEEE Transactions on Medical Imaging}, 30:1581--1590, 2011.

\bibitem{goodwill2012x}
P.~Goodwill, E.~Saritas, L.~Croft, T.~Kim, K.~Krishnan, D.~Schaffer, and
  S.~Conolly.
\newblock X-space mpi: magnetic nanoparticles for safe medical imaging.
\newblock {\em Advanced Materials}, 24(28):3870--3877, 2012.

\bibitem{Gruettner_etal2013}
M.~Gr{\"u}ttner, T.~Knopp, J.~Franke, M.~Heidenreich, J.~Rahmer, A.~Halkola,
  C.~Kaethner, J.~Borgert, and T.~Buzug.
\newblock On the formulation of the image reconstruction problem in magnetic
  particle imaging.
\newblock {\em Biomedical Engineering}, 58(6):583--591, 2013.

\bibitem{jiles1998introduction}
D.~Jiles.
\newblock {\em Introduction to Magnetism and Magnetic Materials}.
\newblock CRC press, 1998.

\bibitem{kirsch2011introduction}
A.~Kirsch.
\newblock {\em An Introduction to the Mathematical Theory of Inverse Problems},
  volume 120.
\newblock Springer, Heidelberg, 2011.

\bibitem{KnoppDiss2010}
T.~Knopp.
\newblock {\em Effiziente Rekonstruktion und alternative Spulentopologien
  f{\"u}r Magnetic Particle Imaging}.
\newblock {Dissertation}, University of L{\"u}beck, 2010.

\bibitem{knopp2011prediction}
T.~Knopp, S.~Biederer, T.~Sattel, M.~Erbe, and T.~Buzug.
\newblock Prediction of the spatial resolution of magnetic particle imaging
  using the modulation transfer function of the imaging process.
\newblock {\em IEEE Transactions on Medical Imaging}, 30(6):1284--1292, 2011.

\bibitem{KnoppBiederer_etal2010}
T.~Knopp, S.~Biederer, T.~Sattel, J.~Rahmer, J.~Weizenecker, B.~Gleich,
  J.~Borgert, and T.~Buzug.
\newblock {2D model-based reconstruction for magnetic particle imaging}.
\newblock {\em Medical Physics}, 37:485--491, 2010.

\bibitem{BuzugKnopp2012}
T.~Knopp and T.~Buzug.
\newblock {\em Magnetic Particle Imaging: An Introduction to Imaging Principles
  and Scanner Instrumentation}.
\newblock Springer, 2012.

\bibitem{Knopp_etal2011hm}
T.~Knopp, M.~Erbe, T.~Sattel, S.~Biederer, and T.~Buzug.
\newblock {A Fourier slice theorem for magnetic particle imaging using a
  field-free line}.
\newblock {\em Inverse Problems}, 27:095004, 2011.

\bibitem{Knopp_etal2010ec}
T.~Knopp, J.~Rahmer, T.~Sattel, S.~Biederer, J.~Weizenecker, B.~Gleich,
  J.~Borgert, and T.~Buzug.
\newblock {Weighted iterative reconstruction for magnetic particle imaging}.
\newblock {\em Physics in Medicine and Biology}, 55:1577--1589, 2010.

\bibitem{KnoppSattel_etal2010}
T.~Knopp, T.~Sattel, S.~Biederer, J.~Rahmer, J.~Weizenecker, B.~Gleich,
  J.~Borgert, and T.~Buzug.
\newblock {Model-Based Reconstruction for magnetic particle imaging}.
\newblock {\em IEEE Transactions on Medical Imaging}, 29:12--18, 2010.

\bibitem{konkle2011development}
J.~Konkle, P.~Goodwill, and S.~Conolly.
\newblock Development of a field free line magnet for projection {MPI}.
\newblock In {\em SPIE Medical Imaging}, page 79650X, 2011.

\bibitem{SPECT}
D.~Kuhl and R.~Edwards.
\newblock Image separation radioisotope scanning.
\newblock {\em Radiology}, 80:653--662, 1963.

\bibitem{Lampe_etal2012}
J.~Lampe, C.~Bassoy, J.~Rahmer, J.~Weizenecker, H.~Voss, B.~Gleich, and
  J.~Borgert.
\newblock {Fast reconstruction in magnetic particle imaging}.
\newblock {\em Physics in Medicine and Biology}, 57:1113--1134, 2012.

\bibitem{lawrence2013catalog}
J.~Lawrence.
\newblock {\em A Catalog of Special Plane Curves}.
\newblock Courier Corporation, 2013.

\bibitem{louis1989inverse}
A.~Louis.
\newblock {\em Inverse und schlecht gestellte Probleme}.
\newblock Teubner, Stuttgart, 1989.

\bibitem{Rahemeretal2009}
J.~Rahmer, J.~Weizenecker, B.~Gleich, and J.~Borgert.
\newblock Signal encoding in magnetic particle imaging: properties of the
  system function.
\newblock {\em BMC Medical Imaging}, 9:4, 2009.

\bibitem{Rahmer_etal2012}
J.~Rahmer, J.~Weizenecker, B.~Gleich, and J.~Borgert.
\newblock {Analysis of a 3-D system function measured for magnetic particle
  imaging}.
\newblock {\em IEEE Transactions on Medical Imaging}, 31(6):1289--1299, 2012.

\bibitem{saritas2013magnetic}
E.~Saritas, P.~Goodwill, L.~Croft, J.~Konkle, K.~Lu, B.~Zheng, and S.~Conolly.
\newblock Magnetic particle imaging ({MPI}) for {NMR} and {MRI} researchers.
\newblock {\em Journal of Magnetic Resonance}, 229:116--126, 2013.

\bibitem{sattel2009single}
T.~Sattel, T.~Knopp, S.~Biederer, B.~Gleich, J.~Weizenecker, J.~Borgert, and
  T.~Buzug.
\newblock Single-sided device for magnetic particle imaging.
\newblock {\em Journal of Physics D: Applied Physics}, 42(2):022001, 2009.

\bibitem{Schomberg2010}
H.~Schomberg.
\newblock {Magnetic particle imaging: model and reconstruction}.
\newblock In {\em 2010 IEEE International Symposium on Biomedical Imaging: From
  Nano to Macro}, pages 992--995, 2010.

\bibitem{PET}
M.~Ter-Pogossian, M.~Phelps, E.~Hoffman, and N.~Mullani.
\newblock A positron-emission transaxial tomograph for nuclear imaging
  {(PETT)}.
\newblock {\em Radiology}, 114:89--98, 1975.

\bibitem{WeizeneckerBorgertGleich2007}
J.~Weizenecker, J.~Borgert, and B.~Gleich.
\newblock {A simulation study on the resolution and sensitivity of magnetic
  particle imaging}.
\newblock {\em Physics in Medicine and Biology}, 52:6363--6374, 2007.

\bibitem{Weizenecker_etal2009}
J.~Weizenecker, B.~Gleich, J.~Rahmer, H.~Dahnke, and J.~Borgert.
\newblock {Three-dimensional real-time in vivo magnetic particle imaging}.
\newblock {\em Physics in Medicine and Biology}, 54:L1--L10, 2009.

\bibitem{zheng2013quantitative}
B.~Zheng, T.~Vazin, W.~Yang, P.~Goodwill, E.~Saritas, L.~Croft, D.~Schaffer,
  and S.~Conolly.
\newblock Quantitative stem cell imaging with magnetic particle imaging.
\newblock In {\em IEEE International Workshop on Magnetic Particle Imaging},
  page~1, 2013.

\end{thebibliography}

\end{document}